\newtheorem{thm}{Theorem}[section]
\newtheorem{cor}[thm]{Corollary}
\newtheorem{lem}[thm]{Lemma}
\newtheorem{prop}[thm]{Proposition}
\newtheorem{exa}[thm]{Example}
\newtheorem{rem}[thm]{Remark}
\numberwithin{equation}{section}
\newcommand{\Z}{\mathbb Z}
\newcommand{\N}{\mathbb N}
\newcommand{\cpkk}{{\overline {{\mathbb C}{\mathbb P}^2}}}
\newcommand{\cpk}{{\mathbb {CP}}^2}
\newcommand{\rpk}{{\mathbb {RP}}^2}
\newcommand{\cpegy}{{\mathbb {CP}}^1}
\begin{document}

\title{Exotic definite four-manifolds with non-trivial fundamental group}

\author{Andr\'{a}s I. Stipsicz}
\address{R\'enyi Institute of Mathematics\\
H-1053 Budapest\\ 
Re\'altanoda utca 13--15, Hungary}
\email{stipsicz.andras@renyi.hu}

\author{Zolt\'an Szab\'o}
\address{Department of Mathematics\\
Princeton University\\
 Princeton, NJ, 08544}
\email{szabo@math.princeton.edu}

\begin{abstract}
  Inspired by a recent result of Levine-Lidman-Piccirillo~\cite{LLP},
  we construct infinitely many exotic smooth structures on some closed
  four-manifolds with definite intersection form and fundamental group
  isomorphic to $\Z /2\Z$. Similar constructions provide exotic smooth
  structures on further four-manifolds with fundamental group $\Z/2\Z$,
  including examples with even $b_2^+$.
\end{abstract}
\maketitle

\section{Introduction}
\label{sec:intro}

The closed, smooth four-manifolds $X$ and $Y$ form an exotic pair (or
$Y$ is \emph{an exotic} $X$, or $X$ admits an exotic smooth structure
$Y$) if $X$ and $Y$ are homeomorphic, but not diffeomorphic. Many such
exotic pairs (or even collections of infinitely many homeomorphic but
pairwise non-diffeomorphic four-manifolds) have been constructed in
the past decades; the homeomorphism is typically verified using (a
variant of) Freedman's groundbreaking result \cite{Fr}, while
non-diffeomorphism is shown by the computation of a smooth
four-manifold invariant, like Donaldson's polynomial invariants
\cite{Do}, Seiberg-Witten invariants \cite{W}, or the mixed invariant
from Heegaard Floer theory \cite{OSzFour}.

The constructions usually rely on some cut-and-paste operation, and
for many simply connected topological manifolds infinitely many
distinct smooth structures have been found in this way.
The existence question of exotic structures seems particularly hard
for manifolds with definite intersection form (including the four-sphere
$S^4$, the complex projective plane $\cpk$, and the connected sums
$\# n \cpk$).

Recently, by stacking elementary cobordisms on top of each other,
Levine-Lidman-Piccirillo~\cite{LLP} found an example of an exotic
oriented definite four-manifold with fundamental group isomorphic
to $\Z/2\Z$.  The first result of this note is an
alternative construction of infinitely many exotic definite
manifolds.  In order to state the theorem, let us consider the
quotient $Z_0$ of $S^2\times S^2$ with the free $\Z/2\Z$-action given by
the action of the antipodal map on one coordinate and by conjugation
on the other.

\begin{thm}\label{thm:main}
  The smooth, definite four-manifold $Z_0\# p\cpkk$ (with $\pi
  _1=\Z/2\Z$) admits infinitely many distinct smooth structures once
  $p\geq 4$. The exotic smooth structures on $Z_0\# 4\cpkk$ are
  irreducible.
 \end{thm}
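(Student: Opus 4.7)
The plan is to construct an infinite family $\{X_K\}_{K\subset S^3}$ of candidate smooth structures on the topological manifold $Z_0 \# p\cpkk$ by equivariant Fintushel--Stern knot surgery along a torus in a convenient smooth model, and to detect their smooth inequivalence by lifting to the double cover, where $b_2^+ \geq 1$ and Seiberg--Witten (or Heegaard Floer mixed) invariants become available. This side-steps the main difficulty, namely that the target $Z_0 \# p\cpkk$ is definite, by exploiting that a nontrivial fundamental group gives access to covering-space gauge theory.

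Concretely, I would locate an embedded torus $T$ of self-intersection zero in a smooth model $M$ homeomorphic to $Z_0 \# p\cpkk$ whose preimage $\widetilde T$ in the double cover $\widetilde M \cong S^2 \times S^2 \# 2p\,\cpkk$ is a torus (or a pair of tori swapped by the deck involution) admitting a cusp neighborhood or vanishing disk appropriate for knot surgery. A natural source for $T$ combines the $S^2$-bundle structure $Z_0 \to \rpk$ with exceptional curves produced by the $p$ blow-ups; the hypothesis $p \geq 4$ reflects the minimum number of $\cpkk$ summands needed to assemble the requisite configuration, and at the threshold $p = 4$ the configuration is maximally tight. For each knot $K \subset S^3$, equivariant Fintushel--Stern surgery along $T$ using $K$ produces a smooth manifold $X_K$. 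Since knot surgery preserves both the intersection form and the fundamental group, and since with care the $\pi_1$-action on $H_2$, the $w_2$-type, and the Kirby--Siebenmann invariant can be matched to those of $Z_0 \# p\cpkk$, Hambleton--Kreck's topological classification of closed four-manifolds with $\pi_1 = \Z/2\Z$ implies that each $X_K$ is homeomorphic to $Z_0 \# p\cpkk$. Passing to $\widetilde X_K$, the Fintushel--Stern knot surgery formula controls the Seiberg--Witten invariant in terms of the Alexander polynomial $\Delta_K$, so selecting knots with pairwise distinct $\Delta_K$ yields infinitely many pairwise non-diffeomorphic $X_K$.

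For the irreducibility claim at $p = 4$, I would argue by contradiction. Any nontrivial smooth decomposition $X_K = A \# B$ must, since $\Z/2\Z$ is not a nontrivial free product, have simply connected $A$ and $\pi_1(B) = \Z/2\Z$, so $\widetilde X_K \cong A \# A \# \widetilde B$. The basic classes of $\widetilde X_K$ supplied by the knot surgery formula are concentrated along multiples of a fixed class and do not factor through a nontrivial connected sum decomposition; combined with the connected-sum vanishing theorem for the mixed invariant in the presence of a $b_2^+ > 0$ summand, this rules out all nontrivial $A$. The main obstacle I anticipate is the topological identification step: in the $\pi_1 = \Z/2\Z$ setting the homeomorphism type depends on secondary invariants beyond the intersection form, so verifying that the surgered manifold actually lies in the homeomorphism class of $Z_0 \# p\cpkk$ requires a careful analysis of the universal-cover action and of the gluing data used in the surgery.
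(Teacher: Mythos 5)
Your overall strategy is the paper's: produce the exotic structures by Fintushel--Stern knot surgery performed equivariantly on a pair of tori interchanged by the free deck involution of the double cover $S^2\times S^2\# 2p\,\cpkk$, identify the homeomorphism type downstairs via Hambleton--Kreck, and distinguish the results by the Seiberg--Witten invariants of the covers. However, the two steps you defer are exactly where the content lies, and as written they are genuine gaps. First, the equivariant configuration is asserted rather than constructed. What is needed is a free, orientation-preserving involution on $S^2\times S^2\#8\cpkk\cong \cpk\#9\cpkk = E(1)$ with quotient homeomorphic to $Z_0\#4\cpkk$, which interchanges two regular fibers of an elliptic fibration each lying in a cusp neighborhood (the cusp is what makes the surgered manifold simply connected upstairs and keeps $\pi_1=\Z/2\Z$ downstairs). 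Your proposed source --- the $S^2$-bundle structure on $Z_0$ together with exceptional curves --- does not obviously yield such tori, and a homologically essential square-zero torus cannot even exist in the definite manifold downstairs except as a torsion or null-homologous class, so the fibration structure must be found on the cover. The paper obtains it by exhibiting $E(1)$ as the double branched cover of $S^2\times S^2$ along a curve $H\cup V$ invariant under $j=(p,c)$, proving a lifting lemma for involutions across branched covers, and then resolving a nontrivial order-$2$ versus order-$4$ dichotomy for the lift to conclude one actually gets a free involution. Without some such argument, ``equivariant knot surgery'' has no input.

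Second, for $p>4$ the distinguishing step is not as routine as ``the knot surgery formula controls SW.'' The relevant cover is $X_m(1)\#2(p-4)\cpkk$, which has $b_2^+=1$ and $b_2^->9$, so the small-perturbation Seiberg--Witten invariant is chamber-dependent and there is no well-defined set of basic classes. The paper addresses this by showing that on the spin$^c$ structures with $c_1$ Poincar\'e dual to $F\pm e_1\pm\cdots\pm e_k$ the value in \emph{every} chamber lies in $\{\,2m(2m-1),\,2m(2m-1)\pm1\,\}$ (wall-crossing changes the value by at most $1$), while for $m'<m$ every value in every chamber is strictly smaller; the quadratic growth of the twist-knot Alexander coefficients is what makes this separation work. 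A version of this wall-crossing control must appear in any correct proof for $p>4$. Minor points: your homeomorphism step is fine, but for smooth manifolds with $\pi_1=\Z/2\Z$ Hambleton--Kreck requires only the $w_2$-type, signature and Euler characteristic (no Kirby--Siebenmann or $\pi_1$-action on $H_2$ bookkeeping is needed); and in the irreducibility argument the case to exclude is a \emph{negative definite} simply connected summand $A$ (forced since $b_2^+(X_m(1))=1$), which is ruled out by the explicit basic classes of $X_m(1)$ rather than by a $b_2^+>0$ connected-sum vanishing theorem.
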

\begin{rem}
  The construction presented here was motivated by results and ideas
  in \cite{LLP}. In that paper one example of such an exotic structure
  was given for $p=4$. To our knowledge, the above theorem provides the
  first example of infinitely many exotic structures on a closed, oriented
  four-manifold with definite intersection form.
  It would be particularly interesting to find
  exotic examples on $Z_0\# p\cpkk$ with $p<4$. 
  \end{rem}

Indeed, the above result is a special case of a more general theorem:
\begin{thm}\label{thm:general}
  The four-manifolds $Z_0\# (n-1)\cpk \# p\cpkk$ with $p\geq 5n-1$
  ($n\in \N$) and the spin four-manifolds $Z_0\# qK3\# (q-1)S^2\times
  S^2$ ($q\in \N$) (with fundamental groups isomorphic to $\Z/2\Z$) 
  carry infinitely many distinct smooth structures.
\end{thm}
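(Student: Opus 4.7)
The plan is to follow the double-cover strategy from Theorem~\ref{thm:main}. Write $M_{n,p}:=Z_0\#(n-1)\cpk\#p\cpkk$ and $N_q:=Z_0\#qK3\#(q-1)(S^2\times S^2)$; a short computation identifies their universal covers as the simply connected four-manifolds
\[
\widetilde M_{n,p}=(S^2\times S^2)\#(2n-2)\cpk\#(2p)\cpkk, \qquad \widetilde N_q=(2q)K3\#(2q-1)(S^2\times S^2),
\]
each carrying a canonical free involution $\tau$ whose quotient is the target. The first step is to fix, for each cover, a smooth model in which $\tau$ is realized by a K\"ahler or symplectic involution, and to locate inside it a $\tau$-invariant cusp-neighborhood torus $T$. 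In the odd case this model is a blown-up elliptic rational surface; the number of $\cpkk$-summands needed to dissolve such a model back into a standard rational connected sum is precisely the bookkeeping behind the inequality $p\geq 5n-1$.

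Given such a torus, I would run a parametrized Fintushel--Stern knot surgery along $T$ using knots $K_k$ with pairwise distinct Alexander polynomials. Using the product structure of a $\tau$-invariant tubular neighborhood of $T$, the surgery can be carried out equivariantly, producing simply connected manifolds $\widetilde M_{n,p}^{(k)}$ with lifted smooth involutions $\tau_k$, and hence smooth quotients $M_{n,p}^{(k)}$ (and similarly $N_q^{(k)}$) with $\pi_1=\Z/2\Z$. The homeomorphism type is controlled by the Hambleton--Kreck classification of closed oriented four-manifolds with $\pi_1=\Z/2\Z$: since knot surgery is supported in a homologically trivial neighborhood, the equivariant intersection form and the $w_2$-type are preserved, so each $M_{n,p}^{(k)}$ is homeomorphic to $M_{n,p}$ (and each $N_q^{(k)}$ to $N_q$). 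Smooth distinction then comes from pulling back to the cover: by Fintushel--Stern the Seiberg--Witten basic classes of $\widetilde M_{n,p}^{(k)}$ encode the Alexander polynomial of $K_k$, so distinct $k$'s give smoothly distinct covers, and therefore smoothly distinct quotients.

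The main obstacle is the equivariant setup in the first step. In the odd family one must simultaneously (i)~put a smooth model on $\widetilde M_{n,p}$ on which the antipodal/conjugation action of $\tau$ extends through the required blow-ups, (ii)~locate a $\tau$-invariant cusp torus on that model, and (iii)~ensure that after dissolving the exotic piece there are enough $\cpkk$-summands left to account for the target---exactly what the inequality $p\geq 5n-1$ tracks. For the spin family the additional difficulty is the compatibility of the spin structure with $\tau$, the correct placement of $w_2$ on the quotient $N_q$, and the coordination of $q$ simultaneous equivariant knot-surgeries, one on each $K3$-summand of the cover.
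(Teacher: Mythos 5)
Your high-level strategy --- equivariant knot surgery on the universal cover, Hambleton--Kreck for the homeomorphism, Seiberg--Witten invariants of the cover for the smooth distinction --- matches the paper's, but the step you defer to the end as ``the main obstacle'' is in fact the entire content of the proof, and the way you propose to set it up would not go through as stated. The paper does not try to find an involution and an invariant cusp torus on a ``smooth model'' of the standard connected sum $(S^2\times S^2)\#(2n-2)\cpk\#2p\cpkk$; it constructs a free involution directly on the elliptic surface $E(n)$ (which for $p=5n-1$ is homeomorphic, though not diffeomorphic for $n>1$, to that connected sum, and for $n=4q$ to $2qK3\#(4q-1)S^2\times S^2$) by realizing $E(n)$ as a double branched cover of $S^2\times S^2$ along $H\cup V$ and lifting $j=(p,c)$ through the cover via Lemma~\ref{lem:lift}, with Example~\ref{exa:square} needed to rule out an order-four lift. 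Both families in the statement, including the spin one, come out of this single construction as $E(n)/j$ and $E(4q)/j$; there is no need to coordinate $q$ separate surgeries on individual $K3$-summands, and it is unclear how your per-summand plan would be compatible with an involution that must permute those summands in pairs. Without an actual construction of the involution and of the tori on which the surgery is performed, the proposal is a plan rather than a proof.

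The second concrete gap is your insistence on a single $\tau$-invariant torus $T$. To extend the free involution over the knot-surgered manifold you would then need a free involution of $S^1\times(S^3\setminus \nu(K))$ matching the restriction of $\tau$ to $\partial\nu(T)$; this is not automatic, is not supplied by ``the product structure of a $\tau$-invariant tubular neighborhood,'' and depends delicately on how $\tau$ acts on that neighborhood. The paper sidesteps the issue entirely by performing the \emph{same} knot surgery on a pair of disjoint fibers interchanged by $j$, so the involution extends tautologically by swapping the two surgered pieces (this is also why the relevant Seiberg--Witten values involve the square of the leading coefficient of $\Delta_{K_m}$). Finally, for $p>5n-1$ the paper simply blows up equivariantly and invokes the blow-up formula for the universal covers; your ``dissolving'' bookkeeping gestures at the inequality $p\geq 5n-1$ but does not replace that argument.
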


\begin{rem}
  Exotic four-manifolds with non-trivial fundamental groups have been
  extensively studied in \cite{RT1, RT2}. Some of the exotic phenomena
  encountered in Theorem~\ref{thm:general} have been already encountered
  in \cite{RT1, RT2}; the examples with $b_2^+$ even above are, however,
  new.
\end{rem}

Similar, but slightly different  methods
(resting on variations of the rational blow-down construction)
provide further interesting exotic examples
--- as a sample
of such results, we have the following

\begin{thm}\label{thm:MoreEx}
  For a given $n\in \N$ there are infinitely many non-diffeomorphic
  four-manifolds all homeomorphic to $Z_0\#2n\cpk \# 8n\cpkk$.
\end{thm}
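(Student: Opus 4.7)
The plan is to combine the rational blow-down constructions used in the preceding theorems with the Fintushel--Stern knot surgery operation, in order to produce an \emph{infinite} family parametrized by knots $K\subset S^3$ rather than just the few exotic structures obtained earlier.

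For each fixed $n\in\N$, I would first construct a simply-connected smooth four-manifold $Y$ homeomorphic to $2n\cpk\#8n\cpkk$ that contains an embedded torus $T\subset Y$ of self-intersection zero with simply connected complement, chosen so that $Y$ carries a nontrivial Seiberg--Witten basic class involving $[T]$. The natural source is an elliptic surface $E(m)$ for a suitable $m$, blown up as needed and then rationally blown down along linear plumbings of spheres so that the intersection form becomes $2n\langle 1\rangle\oplus 8n\langle -1\rangle$; the regular fiber of the elliptic fibration is preserved through the construction and plays the role of $T$.

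Setting $W:=Z_0\# Y$, Freedman's classification identifies $W$ homeomorphically with $Z_0\#2n\cpk\#8n\cpkk$. For each knot $K\subset S^3$ I would then perform Fintushel--Stern knot surgery along $T\subset W$ to obtain $W_K$; since the meridian of $T$ is null-homotopic in the simply-connected region $Y\setminus T\subset W\setminus T$, every $W_K$ is still homeomorphic to $W$. The Fintushel--Stern formula gives $SW_{Y_K}=\Delta_K(\exp(2[T]))\cdot SW_Y$, where $Y_K$ denotes the knot surgery performed inside $Y$ alone and $\Delta_K$ is the symmetrized Alexander polynomial of $K$, so distinct Alexander polynomials produce distinct Seiberg--Witten invariants on the $Y$-side.

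The hard part is transferring this computation across the $Z_0$ summand to conclude that $SW_{W_K}$ itself depends nontrivially on $\Delta_K$. The classical connected-sum vanishing theorem for Seiberg--Witten invariants requires both summands to have $b_2^+\geq 1$, which fails here because $b_2^+(Z_0)=0$; instead one has to exploit the positive scalar curvature metric on $Z_0$ (inherited from the product of round metrics on $S^2\times S^2$, invariant under the antipodal-and-conjugation $\Z/2\Z$-action) together with a neck-stretching argument on the connecting $S^3$ to show that the irreducible Seiberg--Witten solutions on $W_K$ localize on the $Y_K$-side. An accounting of the $|H^2(Z_0;\Z)|=2$ possible $\Spin^c$ extensions across the neck should then express $SW_{W_K}$ in terms of $SW_{Y_K}$, so that knots with distinct Alexander polynomials yield pairwise non-diffeomorphic $W_K$. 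Carrying out this transfer rigorously---and in particular checking that no contribution from $Z_0$ forces unwanted cancellations---is the delicate technical point at the heart of the argument.
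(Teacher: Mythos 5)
Your approach has a fatal parity obstruction that cannot be repaired. For a simply connected four\--manifold $Y$ homeomorphic to $2n\cpk\#8n\cpkk$ one has $b_2^+(Y)=2n$, which is \emph{even}; since the expected dimension of the Seiberg--Witten moduli space satisfies $d(\mathfrak{s})\equiv 1+b_2^+ \pmod 2$, every moduli space on such a $Y$ is odd-dimensional and the Seiberg--Witten invariant of $Y$ vanishes identically. Consequently the knot surgery formula $SW_{Y_K}=\Delta_K(t^2)\cdot SW_Y$ gives nothing, and no choice of $K$ can be detected. The proposed source of $Y$ is also impossible: blow-ups and rational blow-downs preserve $b_2^+$, and $b_2^+(E(m))=2m-1$ is always odd, so no manifold with $b_2^+=2n$ arises that way. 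Finally, even bypassing $Y$ and looking at $W_K=Z_0\#Y_K$ directly does not help: its own $b_2^+$ is again even, and its universal cover is the connected sum $(S^2\times S^2)\#Y_K\#Y_K$, in which both summands have $b_2^+\geq 1$, so all Seiberg--Witten invariants of the cover vanish by the connected sum theorem. Your ``delicate technical point'' of transferring the computation across the $Z_0$ summand is therefore moot --- there is nothing to transfer, and the family $\{W_K\}$ cannot be distinguished by any version of this argument.

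The paper avoids exactly this trap by never forming a connected sum with $Z_0$ (or $Z_1$). Instead it constructs a \emph{free involution} $\iota$ on a simply connected manifold built from $E(2n+1)=E(n)\#_f E(1)\#_f E(n)$, where $b_2^+=4n+1$ is odd and honest basic classes exist. Two $\iota$-equivalent knot surgeries with twist knots $K_m$, followed by equivariant double node surgeries (using the two $I_4$ fibers), blow-ups, and two $\iota$-equivalent rational blow-downs of chains $(-2n-9),(-2),\dots,(-2)$, produce simply connected manifolds with nonvanishing Seiberg--Witten invariants that are distinguished by the leading coefficients of $\Delta_{K_m}$. The exotic manifolds of the theorem are the quotients by $\iota$: they have $b_2^+=2n$ even and thus trivial Seiberg--Witten invariants themselves, but are homeomorphic to $Z_1\#2n\cpk\#8n\cpkk\cong Z_0\#2n\cpk\#8n\cpkk$ by Hambleton--Kreck and are distinguished by the invariants of their double covers. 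If you want to salvage your write-up, the essential missing idea is that the $\Z/2\Z$ in the fundamental group must come from a free quotient of an irreducible-type manifold with odd $b_2^+$, not from a $Z_0$ connected summand.
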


Our proof of Theorem~\ref{thm:main} will rely on the knot surgery
construction of Fintushel and Stern \cite{FSknot}.
By the rational blow-down construction we can
construct further exotic structures on $Z_0\# 4\cpkk$ which cannot be
given by the knot surgery method.

\begin{thm}\label{thm:theDns}
  The smooth manifolds $D_n$ for $n$ even constructed in
  Section~\ref{sec:RatBlowDown} via the rational blow-down
  construction are all homeomorphic to $Z_0\# 4\cpkk$, but not
  diffeomorphic to the four-manifolds constructed in the proof of
  Theorem~\ref{thm:main} in Section~\ref{sec:construction}.
  \end{thm}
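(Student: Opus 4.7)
The plan is to handle the two assertions---homeomorphism with $Z_0\#4\cpkk$ and smooth distinctness from the knot-surgery manifolds of Theorem~\ref{thm:main}---separately, and to transport the smooth invariants to the simply connected double cover where Seiberg-Witten theory is fully available.

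For the homeomorphism I would first track topological data through the rational blow-down construction of Section~\ref{sec:RatBlowDown}. Rational blow-down replaces a negative definite linear plumbing with a rational homology ball, preserving the signature and lowering $b_2$ by the rank of the plumbing; combined with $b_2(Z_0\#4\cpkk)=4$ this pins down the intersection form of each $D_n$ as $-4\langle 1\rangle$. Since the replaced configuration lies in a simply connected subset of the ambient manifold, the fundamental group remains $\Z/2\Z$. I would then invoke the Hambleton-Kreck topological classification of closed oriented four-manifolds with finite cyclic fundamental group, according to which the triple consisting of the intersection form, the $w_2$-type, and the Kirby-Siebenmann invariant determines the homeomorphism type. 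The Kirby-Siebenmann invariant vanishes on any smooth manifold, and the $w_2$-type is read off from the way the removed plumbing sits with respect to characteristic classes of the ambient manifold; what remains is to check that the universal cover of each $D_n$ is non-spin, in agreement with $S^2\times S^2\#8\cpkk$, and this should be visible directly from the construction.

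For the smooth distinctness the obstacle is that $b_2^+=0$ forces the Seiberg-Witten invariants of $D_n$ themselves to vanish. I would therefore pass to the double cover $\widetilde{D_n}$ and to the double covers of the manifolds produced in Theorem~\ref{thm:main}. Since the universal cover of $Z_0$ is $S^2\times S^2$, both sets of double covers are simply connected four-manifolds with $b_2^+=2$, on which Seiberg-Witten invariants are defined. On the knot-surgery side the basic classes of the cover are governed by the Alexander polynomial of the knot via the Fintushel-Stern formula~\cite{FSknot}. On the rational blow-down side the basic classes of the cover follow from the rational blow-down formula of Fintushel-Stern, and depend explicitly on the even parameter $n$. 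The main obstacle will be the numerical check that these two families of basic classes cannot coincide for any knot and any even $n$; I expect the rational blow-down contribution to produce basic classes whose pairings with the surviving exceptional classes grow with $n$, while the knot-surgery contribution stays in a translate controlled by the Alexander polynomial, making the two lists structurally incompatible. Since any diffeomorphism between a $D_n$ and a knot-surgery manifold would lift to an equivariant diffeomorphism of the double covers and hence preserve Seiberg-Witten invariants, this incompatibility completes the argument.
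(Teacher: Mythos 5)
Your overall strategy---Hambleton--Kreck for the homeomorphism, and Seiberg--Witten invariants of the universal covers for the smooth distinction---is the paper's strategy, and the homeomorphism half is essentially right as you describe it. The gap is in the second half, where the decisive step is left as an ``expected numerical check,'' and the heuristic you propose for that check would fail. You predict that the rational blow-down basic classes ``grow with $n$'' while the knot-surgery ones ``stay in a translate controlled by the Alexander polynomial''; but the knot is a free parameter, so for any fixed even $n$ one can choose $K$ with $\deg\Delta_K$ large, making the extremal basic classes of $E(1)_{K,K}$ as large as one likes. No size or growth comparison can separate the two families. What actually separates them (Theorem~\ref{thm:NotKnotSurg}, fed by Proposition~\ref{prop:distinctDn}) is an arithmetic congruence on divisibilities: the extremal basic class of $E(1)_{K,K}$ is $(4d-1)$ times a primitive class, where $d=\deg\Delta_K$, whereas for $D_{2k}$ an explicit homological computation with the configurations ${\mathcal C}_1,{\mathcal C}_2$ shows that the canonical class is exactly $(2n-3)=(4k-3)$ times a primitive class, and Taubes' constraints exclude basic classes of higher divisibility. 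Since $4d-1\equiv 3$ and $4k-3\equiv 1 \pmod 4$, the two extremal divisibilities can never agree. Without identifying this congruence---and carrying out the divisibility computation behind it---the proof is not complete.

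A secondary but substantive error: the relevant double covers have $b_2^+=1$, not $2$ ($S^2\times S^2$ already has $b_2^+=1$, and $D_n$ is homeomorphic to $E(1)=\cpk\#9\cpkk$). This matters because with $b_2^+=1$ the Seiberg--Witten invariant is chamber-dependent; the paper must invoke the small-perturbation invariant (available since $b_2^-\le 9$) and exhibit an explicit wall-crossing class to show that $\pm K$ is basic for $D_n$. Your proposal tacitly assumes a chamber-free theory. Finally, note that the ``$D_n$'' of the theorem statement are really the quotients $G_n=D_n/\iota$ (the $D_n$ themselves are simply connected and homeomorphic to $E(1)$); you implicitly make the same identification, which is consistent with the intended reading, but the lift-to-universal-cover step should be phrased for $G_n$ versus $W_m(1)$.
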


In proving homeomorphisms of four-manifolds with fundamental
  group $\Z/2\Z$ we will use the result of Hambleton and Kreck~\cite{HK}
  recalled below. For showing non-diffeomorphism, we will apply Seiberg-Witten
  invariants for the universal (double) covers. Indeed, our constructions
  will give simply connected four-manifolds equipped with
  free, orientation preserving involutions, and the examples
  claimed by the theorems are the quotient spaces by these $\Z/2\Z$-actions.
  The simply connected manifolds, in turn, are constructed by
  knot surgeries, rational blow-downs, and double node surgeries
  \cite{FSRatBl, FSknot, FSDoubleNode} in a $\Z/2\Z$-equivariant
  manner.

We quickly recall the homeomorphism classification of oriented smooth
four-manifolds with fundamental group $\Z /2\Z$.  In
the simply connected case there are two types of four-manifolds: odd
(Type I) and even (Type II), i.e. non-spin and spin.  The
\emph{$w_2$-type} of a closed, oriented four-manifold $X$ with $\pi
_1(X)=\Z/2\Z$ (or actually with finite fundamental group) is Type I if
the universal cover ${\widetilde {X}}$ is not spin ($w_2({\widetilde
  {X}})\neq 0$). It is Type II if $X$ is spin (and so ${\widetilde
  {X}}$ is also spin), and finally it is Type III if $X$ is non-spin
($w_2(X)\neq 0$) but ${\widetilde {X}}$ is spin $(w_2({\widetilde
  {X}})=0$). With this terminology in place, the topological
classification of smooth, closed, oriented four-manifolds with
fundamental group $\Z/2\Z$ given below easily follows from \cite[Theorem~C]{HK},
together with Donaldson's Diagonalizability Theorem~\cite{Dona}
(and the classification of indefinite unimodular forms, cf. \cite{GS}):

\begin{thm}(\cite{HK})
  \label{thm:classificationHK}
  Suppose that $X_1, X_2$ are two smooth, closed, oriented four-manifolds
  with fundamental group $\pi _1(X_1)=\pi _1(X_2)=\Z/2\Z$. Then $X_1$ and
  $X_2$ are homeomorphic if and only if they have the same $w_2$-type,
  the same signature and the same Euler characteristic.
\end{thm}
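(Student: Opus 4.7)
The plan is to combine the topological classification of~\cite[Theorem~C]{HK} with the fact that smooth four-manifolds have vanishing Kirby--Siebenmann invariant, thereby reducing the problem to matching intersection forms; the intersection form is then pinned down by signature, Euler characteristic, and $w_2$-type via Donaldson's diagonalizability theorem and the classification of indefinite unimodular forms. The ``only if'' direction is immediate, since the signature, the Euler characteristic, and the second Stiefel--Whitney classes of both $X_i$ and its universal cover $\widetilde{X_i}$ are homeomorphism invariants, so I focus on the ``if'' direction.

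First I would invoke~\cite[Theorem~C]{HK}, according to which two closed oriented four-manifolds with a fixed finite fundamental group are homeomorphic if and only if they have the same $w_2$-type, the same Kirby--Siebenmann invariant, and isometric (equivariant) intersection forms. For smooth $X_1$ and $X_2$ the Kirby--Siebenmann invariants vanish, and the $w_2$-types and $\pi_1$ agree by hypothesis, so it remains to show that $X_1$ and $X_2$ carry isometric intersection forms.

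Next I would read off the structure of the intersection form from the given invariants. Since $H_1(X_i;\Z)=\Z/2\Z$ is torsion, $b_1(X_i)=b_3(X_i)=0$, so $b_2(X_i)=\chi(X_i)-2$; together with the common value of $\sigma(X_i)$ this fixes $b_2^{\pm}(X_i)$. The $w_2$-type records the parity of the intersection form: even in Type II (when $X_i$ is spin) and odd in Types I and III (when $w_2(X_i)\neq 0$). In the indefinite case, the classification of indefinite unimodular symmetric bilinear forms over $\Z$ (see~\cite{GS}) asserts that rank, signature, and parity determine the isometry class. In the definite case, Donaldson's diagonalizability theorem~\cite{Dona} shows that the form is a standard diagonal form $n\langle\pm 1\rangle$, again pinned down by rank and sign of signature; note that unimodularity combined with Donaldson's theorem precludes non-zero definite even forms, so the definite case only arises in Types I and III.

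The main technical point requiring care is that the Hambleton--Kreck invariant is the equivariant intersection form on the universal cover, rather than the ordinary form on $X_i$. For $\pi_1=\Z/2\Z$, however, the ordinary form together with the $w_2$-type---which encodes the parity of the form on $\widetilde{X_i}$---determines the equivariant form up to isometry, and this compatibility is built into the formulation of~\cite[Theorem~C]{HK}. This is the step most likely to demand attention, but it does not require genuinely new input beyond the topological classification cited above, so the reduction outlined above should suffice.
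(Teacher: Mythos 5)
Your overall route is exactly the one the paper intends: the paper gives no proof of this statement, only the remark that it follows from \cite[Theorem~C]{HK} together with Donaldson's diagonalizability theorem and the classification of indefinite unimodular forms, and your reduction to matching intersection forms is the right skeleton. One point is minor: Theorem~C of \cite{HK} classifies closed oriented topological four-manifolds with finite cyclic fundamental group by the fundamental group, the \emph{ordinary} intersection form on $H_2(X;\Z)/\Tor$, the $w_2$-type and the Kirby--Siebenmann invariant --- not by the equivariant form on the universal cover --- so the ``main technical point'' you flag at the end is not actually an issue, and the vague appeal to compatibility being ``built into the formulation'' should simply be removed.

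The substantive problem is your parity rule: it is false that Type~III manifolds have odd intersection form. If $\pi_1(X)=\Z/2\Z$, the universal cover is spin and $w_2(X)\neq 0$, then $w_2(X)=\alpha\cup\alpha=Sq^1\alpha$ for the generator $\alpha\in H^1(X;\Z/2\Z)$ (the Gysin sequence of the double cover identifies the kernel of $p^*$ on $H^2(\,\cdot\,;\Z/2\Z)$ with $\alpha\cup H^1$), so $w_2(X)$ is the mod~$2$ reduction of a torsion integral class and evaluates to zero on every integral homology class; by the Wu formula the form on $H_2(X;\Z)/\Tor$ is therefore \emph{even}. The Enriques surface, of Type~III with intersection form $H\oplus E_8(-1)$, is the standard example. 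Your argument only needs the implication ``same $w_2$-type $\Rightarrow$ same parity,'' so the theorem is not endangered, but as written the Type~III case of that implication rests on a false assertion: the deduction ``$w_2(X)\neq 0\Rightarrow$ odd form'' fails for non-simply-connected manifolds because the Poincar\'e dual of $w_2$ need not be the reduction of an integral class, and this is precisely the phenomenon the Type~I/III distinction exists to capture. Replace the rule by: odd in Type~I, even in Types~II and~III (with the torsion argument above); the remainder of your proof --- $b_2=\chi-2$, $b_2^{\pm}$ from the signature, the indefinite classification, and Donaldson's theorem in the definite case --- then goes through.
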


The paper is organized as follows: in Section~\ref{sec:construction}
we describe the construction of our exotic manifolds and prove
Theorems~\ref{thm:main} and \ref{thm:general}. In
Section~\ref{sec:another} we define another involution on certain
elliptic surfaces and in Sections~\ref{sec:RatBlowDown} (when combined
with the rational blow down construction and/or double node surgeries)
we find further exotica using this approach, leading to the proofs of
Theorems~\ref{thm:MoreEx} and \ref{thm:theDns}.

\bigskip

\noindent {\bf {Acknowledgements}:} The first author was partially
supported by the \emph{\'Elvonal (Frontier) grant}  KKP144148 of the
NKFIH.  The second author was partially supported by NSF grant
DMS-1904628 and the Simons Grant \emph{New structures in
  low-dimensional topology}. We would like to thank Mark Powell for
useful correspondence, and the organizers of the conference ''\emph{Gauge
Theory and Topology: in Celebration of Peter Kronheimer’s 60$^{th}$
Birthday}'' in Oxford, where this collaboration started.

\section{The construction}
\label{sec:construction}

The main ingredient of our construction is the identification of a
smooth, fixed point free involution on the elliptic surface $E(n)$.
(In the following $E(n)$ denotes the simply connected elliptic surface
with a section, Euler characteristic $12n$ and signature $-8n$.)
We will get Theorem~\ref{thm:main} by specializing this action to the 
rational elliptic surface $E(1)=\cpk \# 9\cpkk$.

For the definition, we present the elliptic surface $E(n)$ as the
double branched cover of $S^2\times S^2$, branched along the following
curve: take four points $a_1, a_2, a_3, a_4\in S^2$ and consider
$H=\cup _{i=1}^4\{ a_i\} \times S^2\subset S^2\times S^2$.  Take
further $2n$ points $b_1, \ldots , b_{2n}$ and consider $V=\cup
_{j=1}^{2n} S^2\times \{ b_j\}$. Define the branch curve now to be $H\cup
V\subset S^2\times S^2$. As this is a singular curve (with $8n$
transverse double points), the double branched cover will have $8n$
simple $A_1$ singularities. We get a smooth manifold either by
resolving these singular point (i.e.  topologically replacing the
cone neighborhood of each singular point
with the cotangent disk bundle of $S^2$), or by blowing up the
transverse double points in the branch locus $H\cup V$ and taking
the double branched cover along the strict transforms.  We will follow this
second route, and consider the proper transform of $H\cup V$ in
$S^2\times S^2\#8n\cpkk$. By composing the branch cover map with the
projection to the second factor, we get an elliptic fibration on the
branched cover. This fibration has $2n$ singular fibers, each of type
$I_0^*$ (in the terminology of \cite{HKK}); topologically such a
singular fiber is a plumbing along the tree with five vertices, four
of which are leaves and the fifth of degree 4, and all framings are
$-2$.

Consider now the antipodal map $p\colon S^2\to S^2$ and the complex
conjugation $c\colon \cpegy \to \cpegy $. The former map is free,
while $c$ has the real circle in $\cpegy=S^2$ as fixed point set.
Define $j\colon S^2\times S^2\to S^2\times S^2$ as $j=(p,c)$.  It is a
free action on $S^2\times S^2$ (studied in \cite{Lev}, for example),
with $Z_0=S^2\times S^2/j$. The manifold $Z_0$ is indeed a spin
manifold and admits an $S^2$-fibration over $\rpk$; a fixed point $x$
of $c$ defines a sphere $S^2\times \{ x\}$ which gives rise to a
section of this fibration with self-intersection 0. (More on
$S^2$-fibrations over $\rpk$ see \cite[page~237]{Hil}; a Kirby diagram
of $Z_0$ will be given in Figure~\ref{fig:kirby}.)

Assume that the points $\{ a_1, \ldots , a_4\}$ and $\{ b_1, \ldots ,
b_{2n}\}$ are chosen so that these sets are invariant under $p$ and
$c$ respectively (and none of them are fixed points of $c$), that is,
each $a_i$ is mapped to some other $a_j$ under $p$ and similarly each
$b_i$ is mapped to some other $b_j$ by $c$.  We claim that the
involution $j$ lifts to an involution on $E(n)$.  For this, we need a
lemma.

\begin{lem}\label{lem:lift}
  Suppose that $b\colon X\to Y$ is a double branched cover of
  connected manifolds along the branch set $B\subset Y$. Suppose
  furthermore that $Y$ admits a smooth involution $\tau\colon Y\to Y$
  and $B$ is invariant under $\tau$ as a set. Assume that the double
  branched cover $X\to Y$ is given by the representation $\phi\colon
  H_1(Y\setminus B; \Z )\to \Z/2\Z$, and the induced map $\tau _*$ on
  $H_1(Y\setminus B; \Z )$ commutes with $\phi$. Then $\tau$ lifts to
  a diffeomorphism $\sigma \colon X\to X$ with either $\sigma ^4={\rm
    {Id}}_X$ or $\sigma ^2={\rm {Id}}_X$.
\end{lem}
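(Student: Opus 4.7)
\medskip
\noindent\textbf{Proof proposal.} The plan is to construct $\sigma$ first on the complement of the branch locus via covering space theory, extend it smoothly across $b^{-1}(B)$ using an equivariant normal form, and finally compute the order by analyzing $\sigma^2$ as a deck transformation. Write $Y^{\circ}=Y\setminus B$ and $X^{\circ}=X\setminus b^{-1}(B)$, so that $b\colon X^{\circ}\to Y^{\circ}$ is the connected unbranched double cover classified by $\ker\phi\subset H_1(Y^{\circ};\Z)$, and $X$ is reconstructed from $X^{\circ}$ by the standard completion along $B$.

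First I would observe that since $\tau$ preserves $B$ setwise, it restricts to a self-diffeomorphism $\tau^{\circ}$ of $Y^{\circ}$. The hypothesis that $\tau_{*}$ commutes with $\phi$ means $\phi\circ\tau_{*}=\phi$ as maps $H_1(Y^{\circ};\Z)\to\Z/2\Z$ (the target admitting only the trivial action), so $\tau^{\circ}$ preserves the subgroup $\ker\phi$. The lifting criterion then produces a diffeomorphism $\tilde\sigma\colon X^{\circ}\to X^{\circ}$ covering $\tau^{\circ}$, unique up to composition with the deck involution $\iota$ of $b|_{X^{\circ}}$.

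Next I would extend $\tilde\sigma$ across $b^{-1}(B)$. By the equivariant tubular neighborhood theorem applied to the $\tau$-invariant submanifold $B$, one can find a $\tau$-invariant tubular neighborhood $\nu(B)\subset Y$ in which $\tau$ preserves the normal disc bundle structure, and the branched cover is locally modeled by the fibrewise squaring map $(z,w)\mapsto(z^2,w)$ on the normal disc bundle. Pulling the local diffeomorphism $\tau$ through this squaring map on each fibre produces a smooth lift $\sigma$ on all of $X$ extending $\tilde\sigma$; applying the same procedure to $\tau^{-1}=\tau$ yields a smooth inverse, so $\sigma$ is a diffeomorphism of $X$. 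The main obstacle is precisely this smoothness across the branch locus: without the equivariant normal form, the naive extension is only continuous, so one must work in coordinates compatible with both $\tau$ and the branched cover.

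Finally, since $\sigma$ covers $\tau$ and $\tau^{2}=\Id_{Y}$, the map $\sigma^{2}$ covers $\Id_{Y}$ and hence is a deck transformation of $b\colon X\to Y$. The deck group is $\{\Id_{X},\iota\}$, so $\sigma^{2}\in\{\Id_{X},\iota\}$. In the first case $\sigma^{2}=\Id_{X}$, and in the second $\sigma^{4}=\iota^{2}=\Id_{X}$, which gives the claim.
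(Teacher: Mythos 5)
Your proposal is correct and follows essentially the same route as the paper's proof: construct the lift over $Y\setminus B$ by covering space theory (the paper carries out the path-lifting argument by hand, using exactly the hypothesis $\phi\circ\tau_*=\phi$ to get path-independence), extend over the branch locus, and conclude that the order is $2$ or $4$. Your version is in fact slightly more careful on two points the paper treats informally --- smoothness of the extension across $b^{-1}(B)$ via an equivariant local model, and the identification of $\sigma^2$ as a deck transformation to pin down the order --- but these are refinements of the same argument rather than a different approach.
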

\begin{proof}
  Consider a point $x\in X\setminus b^{-1}(B)$ with $y=b(x)\in Y\setminus B$.
  Take $\tau (y)$ and define $\sigma (x)$ to be one of the preimages
  of $\tau (y)$ under $b$. For a point $x'\in b^{-1}(B)$, define
  $\sigma (x')=b^{-1}(\tau (b(x')))$. For any other point $x'\in
  X\setminus b^{-1}(B)$ consider a path $\gamma$ from $x$ to $x'$ in
  $X\setminus b^{-1}(B)$, and consider $b (\gamma )$ connecting $y$
  and $y'= b(x')$. Apply $\tau $ to $b(\gamma)$, giving a path in
  $Y\setminus B$ from $\tau (y)$ to $\tau (y')$. Lift this path to
  $X\setminus b^{-1}(B)$ starting at $\sigma (x)$; the endpoint of this lifted
  path will be $\sigma (x')\in X$.  This point will be independent of
  the chosen path $\gamma$; indeed, if $\gamma '$ is another path from
  $x$ to $x'$, then the loop $\gamma ^{-1}*\gamma '$ projects to a
  loop in $Y\setminus B$ which lifts to a loop, hence its $\tau$-image
  in $Y\setminus B$ also lifts to a loop (as $\tau _*$ commutes with
  $\phi$).

  As the lift $\sigma$ interchanges the preimages of points of
  $Y\setminus B$, which consist of two points, $\sigma$ is either of
  order 2 (i.e. an involution) or of order 4.
\end{proof}

\begin{exa}\label{exa:square}
Consider the double branched cover map $b\colon T^2\to S^2$ between
the 2-torus and the 2-sphere, branched in four points. Consider the antipodal
involution  $p\colon S^2\to S^2$ and assume that it permutes the branch points.
Consider a main circle $C$ separating the four branch points into two groups
of two; $C$ is obviously fixed setwise by $p$. Its inverse image in
$T^2$ consists of two disjoint circles, and the inverse image of a point
$x\in C$ lifts to one point in each component. Now the lift $\sigma$
of $p$ either fixes the two components (setwise) or interchanges them,
hence $\sigma ^2$ fixes them, and hence fixes the lift of $x$, implying that
$\sigma ^2=$Id, so the lift $\sigma$ is an involution.
\end{exa}

\begin{exa}
  Straightforward modification of the above argument shows that if $b\colon
  T^2\to S^2$ is the above branched cover, but the involution on $S^2$
  is the conjugation map $c$, then its lift $\sigma$ is of order
  four. In a similar manner, if we consider $b\colon S^2\to S^2$
  branched in two points, with the antipodal map as involution
  downstairs, then the lift is again of order four.
\end{exa}  
  
Consider now our earlier situation of $S^2\times S^2$ with the curve
$H\cup V$.  Note that as $j$ is an antiholomorphic map, it maps
directions through $x$ to directions through $j(x)$, hence naturally
extends to the simultaneous blow-up of $x$ and $j(x)$.  Extend the
involution $j$ from $S^2\times S^2$ to the blown up manifold
$S^2\times S^2\# 8n\cpkk$ (where we blow up all the transverse double
points of $H\cup V$) in the obvious way; denote the resulting
involution still with $j$.  By the choices of the points $a_i$ and
$b_k$, the branch locus is $j$-invariant, and the exceptional divisors
of the blow-ups map to each other (inducing an orientation reversing
map on the exceptional curves).  As the first homology group of the
complement of the branch locus is isomorphic to $\Z$,
Lemma~\ref{lem:lift} lifts $j$ to a free action on the double branched
cover. By taking a sphere $S^2\times \{ b \}$ for $b\in S^2$ with
$c(b)=b$, Example~\ref{exa:square} shows that we get a free,
orientation preserving involution on $E(n)$, which we will still
denote by $j$.  This self-diffeomorphism preserves fibers, but
reverses the orientation on them.  If $n$ is not divisible by 4, then
the quotient will be homeomorphic to $Z_0\# (n-1)\cpk \# (5n-1)\cpkk$,
with fundamental group $\Z /2\Z$, Euler characteristic $6n$ and
signature $-4n$. For odd $n$ this is a non-spin manifold with non-spin
universal cover (Type I), while if $n$ is even and not divisible by
four, it is non-spin with spin universal cover (Type III).  If $n=4q$
then the quotient is homeomorphic to $Z_0\#q K3\# (q-1)S^2\times S^2$;
a spin manifold with spin universal cover (i.e. of Type II).  Note
that the quotient $E(n)/j$ is an exotic smooth structure on these
manifolds once $n>1$, since the universal (double) covers of the above
manifolds decompose as connected sums, while $E(n)$ is minimal for all
$n>1$ (shown by the Seiberg-Witten basic classes of these manifolds).

Further (infinite families of) exotic structures can be constructed on
these manifolds as follows.  Consider two regular fibers in the
elliptic fibration on $E(n)$ resulting from the above double branched
cover construction, so that they map to each other under the
involution $j$.  For $m>0$ let $K_m$ denote the twist knot with
symmetrized Alexander polynomial $\Delta _{K_m}(t)=mt-(2m-1)+mt^{-1}$,
having $2m+1$ crossings in its alternating projection.  (Indeed, any
collection of infinitely many knots with pairwise sufficiently
distinct Alexander polynomial would be an equally good choice.)
Applying two knot surgeries (as described in \cite{FSknot}) along the
chosen fibers of $E(n)$ with the same knot $K_m$, we get a sequence of
four-manifolds $\{ X_m(n)\}_{m\in \N}$ which are all homeomorphic to
$E(n)$.

\begin{thm}\label{thm:distinctXm}
  For a fixed $n\in \N$ the smooth four-manifolds $X_m(n)$ with $m>0$
  are all homeomorphic to $E(n)$ and are pairwise non-diffeomorphic.
\end{thm}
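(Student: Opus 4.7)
The proof splits into two parts: showing the $X_m(n)$ are all homeomorphic to $E(n)$ (via Freedman), and distinguishing them smoothly (via Seiberg--Witten invariants and the Fintushel--Stern knot surgery formula).

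For the homeomorphism statement, the key is that knot surgery along a torus $T \subset X$ with $T\cdot T = 0$ and $\pi_1(X\setminus T) = 1$ preserves simple connectivity, Euler characteristic, signature, and $w_2$-type. A regular fiber $F$ of the elliptic fibration on $E(n)$ satisfies $\pi_1(E(n)\setminus F) = 1$: the presence of singular fibers (with vanishing cycles) kills the meridian of $F$ in $\pi_1$. Performing two knot surgeries on disjoint regular fibers therefore produces a simply connected smooth 4-manifold $X_m(n)$ with the same intersection form and parity as $E(n)$. Freedman's classification of simply connected topological 4-manifolds then gives $X_m(n) \approx E(n)$.

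For non-diffeomorphism I would apply the Fintushel--Stern knot surgery formula. Each such knot surgery multiplies the Seiberg--Witten polynomial $\mathcal{SW}_X = \sum_L SW_X(L)\, e^L$ by $\Delta_K(e^{2[T]})$. Since both of our surgery tori are homologous to the fiber class $[F]$, setting $t = e^{2[F]}$ one obtains
\[
\mathcal{SW}_{X_m(n)} \;=\; \mathcal{SW}_{E(n)} \cdot \Delta_{K_m}(t)^2.
\]
For $n \geq 2$ the factor $\mathcal{SW}_{E(n)} = (t - t^{-1})^{n-2}$ is nonzero, and since the coefficients of
\[
\Delta_{K_m}(t)^2 \;=\; \bigl(mt - (2m-1) + mt^{-1}\bigr)^2
\]
depend injectively on $m$ (e.g.\ the leading coefficient is $m^2$), the resulting Laurent polynomials are pairwise distinct. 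Hence the $X_m(n)$ are pairwise non-diffeomorphic for $n \geq 2$.

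The main obstacle is the case $n = 1$: here $E(1) = \cpk \# 9\cpkk$ has $b_2^+ = 1$, so the usual Seiberg--Witten invariants depend on a chamber and a direct application of the above argument is not available. To handle this one appeals to the refined small-perturbation (or positive-scalar-curvature chamber) Seiberg--Witten invariants, which remain diffeomorphism invariants, together with Fintushel--Stern's verification that the knot surgery product formula persists in the $b_2^+ = 1$ setting; this distinguishes the $X_m(1)$ and completes the proof.
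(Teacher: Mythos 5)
Your proposal is correct and follows essentially the same route as the paper: simple connectivity via fibers in cusp neighbourhoods (your vanishing-cycle argument) plus Freedman for the homeomorphism, and the Fintushel--Stern knot surgery formula $\mathcal{SW}_{E(n)}\cdot\Delta_{K_m}(t)^2$ for the smooth distinction, with the $b_2^+=1$ case $n=1$ handled by the chamber-dependent computation in Section~5 of \cite{FSknot} (where the paper records the concrete values $\pm m^2$ and $\pm 2m(2m-1)$ that you invoke implicitly).
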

\begin{proof}
  It is a simple fact that knot surgery does not change type, Euler
  characteristic and signature. As the fibers in the above
  construction can be chosen in cusp neighbourhoods, the manifolds
  $X_m(n)$ are all simply connected. Then Freedman's classification of
  simply connected smooth four-manifolds up to homeomorphism \cite{Fr}
  implies that the two surgeries will not change the topologcial type.

  The calculation of the Seiberg-Witten invariants of $X_m(n)$ is
  described in \cite[Theorem~1.1]{FSknot} for $n>1$ and in
  \cite[Section~5]{FSknot} when $n=1$. Indeed, this latter calculation
  shows that the Seiberg-Witten function $SW_{X_m(1)}$ takes the value
  $\pm m^2$ on the spin$^c$ structure Poincar\'e dual to $\pm 3$-times
  the fiber $F$, and $SW_{X_m(1)}$ takes the value $\pm 2m(2m-1)$ on $PD(\pm F)$.

  These results show that for $m>0$ the four-manifold $X_m(n)$ is an
  exotic $E(n)$, and if $m\neq m'$ then $X_m(n)$ and $X_{m'}(n)$ are
  smoothly distinct.
\end{proof}

The choice of the fibers in the knot surgery operation (and the fact
that we use the same knots for both fibers) ensures that the
involution $j$ extends from the complement of the pair of fibers to
$X_m(n)$ for all $m, n\in \N$ as an orinetation preserving, free
involution.  Let $W_m(n)$ denote the smooth oriented manifold we get
as the quotient of $X_m(n)$ by this extension.

\begin{thm}\label{thm:distinctWm}
  For a fixed $n$ the smooth four-manifolds $\{ W_m(n) \} _{m\in \N}$
  are homeomorphic to $Z_0\#(n-1)\cpk\# (5n-1)\cpkk$ if $n$ is not
  divisible by 4, and to $Z_0\#qK3\# (q-1)S^2\times S^2$ if $n=4q$.
  For a fixed $n$ the four-manifolds $\{ W_m(n)\}_{m\in \N}$ are
  pairwise non-diffeomorphic and are all irreducible. 
\end{thm}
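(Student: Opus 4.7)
My plan is to verify the homeomorphism type via the Hambleton-Kreck classification (Theorem~\ref{thm:classificationHK}), distinguish the $W_m(n)$ smoothly by lifting to universal double covers and applying Theorem~\ref{thm:distinctXm}, and then deduce irreducibility from the irreducibility of these universal covers.

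For the homeomorphism claim I would first observe that the free, orientation preserving involution on $X_m(n)$ constructed just before the statement realizes $X_m(n)$ as the universal double cover of $W_m(n)$, so $\pi_1(W_m(n))=\Z/2\Z$. Since knot surgery is a local operation that preserves the Euler characteristic, the signature, and the homeomorphism type (and hence $w_2$) of the ambient four-manifold, and since the two knot surgery regions in $X_m(n)$ are freely interchanged by the involution, the invariants $(\chi,\sigma,w_2\text{-type})$ of $W_m(n)$ coincide with those of $E(n)/j$, namely $6n$, $-4n$, and the type (I, II or III) computed in Section~\ref{sec:construction}. Theorem~\ref{thm:classificationHK} then identifies $W_m(n)$ with $Z_0\#(n-1)\cpk\#(5n-1)\cpkk$ when $4\nmid n$ and with $Z_0\# qK3\#(q-1)S^2\times S^2$ when $n=4q$.

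For pairwise non-diffeomorphism, any smooth identification $W_m(n)\cong W_{m'}(n)$ would lift to a diffeomorphism between the universal covers $X_m(n)$ and $X_{m'}(n)$, contradicting Theorem~\ref{thm:distinctXm} whenever $m\neq m'$.

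For irreducibility, suppose $W_m(n)=A\# B$ is a non-trivial connected sum. Since $\pi_1(W_m(n))=\Z/2\Z$ decomposes as the free product $\pi_1(A)*\pi_1(B)$, after relabeling I may assume $\pi_1(A)=1$ with $A\not\cong S^4$ and $\pi_1(B)=\Z/2\Z$; passing to the universal cover then produces a non-trivial decomposition $X_m(n)=\widetilde{B}\# A\# A$. Hence it is enough to show that $X_m(n)$ is irreducible. For $n\geq 2$, where $b_2^+(X_m(n))=2n-1\geq 3$, this follows at once from the Seiberg-Witten connected sum vanishing theorem together with the nonvanishing basic classes of $X_m(n)$ established in the proof of Theorem~\ref{thm:distinctXm}. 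The main obstacle is the case $n=1$, where $b_2^+(X_m(1))=1$ so the vanishing theorem does not apply directly; I would instead combine the explicit values $SW_{X_m(1)}(\mathrm{PD}(\pm 3F))=\pm m^2$ and $SW_{X_m(1)}(\mathrm{PD}(\pm F))=\pm 2m(2m-1)$ with the wall-crossing formula to exclude summands with $b_2^+\geq 1$, and then compare the structure of the basic classes against those of a putative negative-definite connected summand to rule out that possibility.
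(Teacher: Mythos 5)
Your proposal is correct and follows essentially the same route as the paper: Hambleton--Kreck for the homeomorphism type, lifting a putative diffeomorphism to the universal covers $X_m(n)$ and invoking Theorem~\ref{thm:distinctXm} for non-diffeomorphism, and reducing irreducibility of $W_m(n)$ to that of $X_m(n)$ via its Seiberg--Witten invariants. The only difference is one of detail: the paper simply asserts that irreducibility of $X_m(n)$ is ``a simple consequence of its Seiberg--Witten invariants,'' whereas you correctly flag and sketch the extra care needed when $n=1$ (where $b_2^+=1$ forces a chamber-dependent argument), which is a point the paper glosses over.
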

\begin{proof}
  The topological classification of closed smooth four-manifolds with
  fundamental group $\Z/2\Z$ \cite[Theorem~C]{HK}, as given in
  Theorem~\ref{thm:classificationHK}, implies that for fixed $n\in \N$
  the manifolds $W_m(n) $ are all homeomorphic to the manifolds given
  in the statement.  As by construction the universal cover of
  $W_m(n)$ is $X_m(n)$, Theorem~\ref{thm:distinctXm} implies that for
  a fixed $n$ the $W_m(n)$'s ($m\in \N$) are pairwise
  non-diffeomorphic.  Irreducibility of $W_m (n)$ follows from the
  irreducibility of $X_m(n)$, which in turn is a simple consequence of
  its Seiberg-Witten invariants.
  \end{proof}

\begin{proof}[Proof of Theorem~\ref{thm:main}]
  For a fixed $p\geq 4$ consider the smooth four-manifolds
\[
\{ W_m(1)\# (p-4)\cpkk\mid m\in \N \} .
\]
    The homeomorphism of $W_m(1)\# (p-4)\cpkk$ with $Z_0\# p\cpkk$
    follows from Theorem~\ref{thm:distinctWm}. The universal cover of
    $W_m(1)\# (p-4)\cpkk$ is diffeomorphic to $X_m(1)\# 2(p-4)\cpkk$.
    A partial understanding of the Seiberg-Witten invariant of
    $X_m(1)\# k\cpkk$ will be sufficient to conclude the
    argument. Indeed, if the Poincar\'e dual of the first Chern class
    of a spin$^c$ structure on $X_m(1)\# k\cpkk$ is $F\pm e_1\pm
    \ldots \pm e_k$, then the absolute values of the Seiberg-Witten
    values on this spin$^c$ structure and for any chamber are from the
    set $\{ 2m(2m-1), 2m(2m-1)\pm 1\}$. On the other hand, if $m'<m$,
    then for any spin$^c$ structure and any chamber for $X_{m'}(1)\#
    k\cpkk$ the absolute value of the Seiberg-Witten invariant is
    $<2m(2m-1)-1$, distinguishing $X_m(1)\# k\cpkk$ and $X_{m'}(1)\#
    k\cpkk$. This observation concludes the proof.
  \end{proof}

\begin{proof}[Proof of Theorem~\ref{thm:general}]
The statements of the theorem are verified in the proof of
Theorem~\ref{thm:distinctWm} with the exception of exotic structures on
$Z_0\#(n-1)\cpk \# p\cpkk $ with $p>5n-1$. These manifolds can be constructed
by repeated blow-ups, and exoticness follows from the blow-up formula
for the universal covers.
\end{proof}

\section{Another involution on elliptic surfaces}
\label{sec:another}
There is another involution we can use to achieve similar results; in
this picture we adapt other techniques (such as the rational
blow-down method) to find interesting exotica.  For this other
involution take the antipodal maps on the two components of $S^2\times
S^2$ giving a fixed point free involution $\iota$ on this
four-manifold.  Let $Z_1$ denote the quotient $S^2\times S^2/\iota$.
After connect summing $S^2\times S^2$ with two copies of $4\cpkk$ in
two points in the same $\Z /2\Z$-orbit, we get $\cpk \# 9\cpkk$ with
an orientation preserving free involution, also denoted by $\iota$,
with $Z_1\# 4\cpkk$ as quotient.

\begin{rem}
  We could also use the same map $j$ on $S^2\times S^2$ as before; we
  chose $\iota$ because of its symmetry. Note that $Z_1$ above differs
  from $Z_0=S^2\times S^2/j$ we considered earlier as $Z_0$ is spin,
  while the diagonal $\{ (x,x)\mid x\in S^2\}\subset S^2\times S^2$
  gives rise to a copy of $\rpk \subset Z_1$ with self-intersection 1
  (in homology with $\Z/2\Z$-coefficients), consequently $Z_1$ is
  non-spin. Indeed, both $Z_0$ and $Z_1$ admit an $S^2$-fibration
  over $\rpk$, and their Kirby diagrams are given in
  Figure~\ref{fig:kirby} (cf. \cite[Figure~5.46]{GS}). The diagram
  also shows that $Z_0\# \cpkk$ is diffeomorphic to $Z_1\# \cpkk$.
  \end{rem}

\begin{figure}[htb]
\begin{center}
\setlength{\unitlength}{1mm}
\includegraphics[height=4cm]{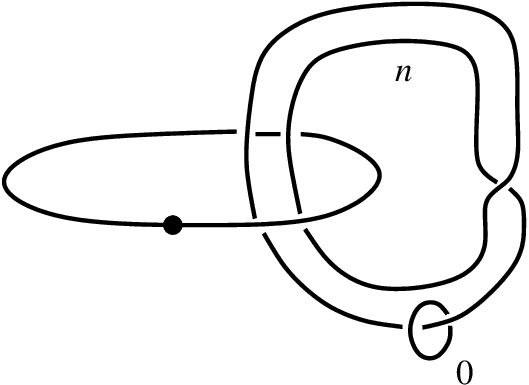}
\end{center}
\caption{\quad A Kirby diagram for $Z_0$ (when $n$ is even) and for
  $Z_1$ (when $n$ is odd). By blowing up the crossing of the two
  framed circles, a simple argument shows that $Z_0\# \cpkk$ and
  $Z_1\# \cpkk$ are diffeomorphic.}
\label{fig:kirby}
\end{figure}

Return now to the involution $\iota =(p, p)\colon S^2\times S^2 \to
S^2\times S^2$ described above, where $p\colon S^2\to S^2$ is the
antipodal map.  Fix two pairs of point $a, b\in S^2$ and $c,d\in S^2$
so that $b\neq p (a)$ and $c\neq p(d)$, that is, $a$ and $b$ (and
similarly $c$ and $d$) are \emph{not} antipodal.  (The special case
$a=c$ is also allowed.)  Take the curves
\[
C_0=(\{a\}\times S^2)\cup (\{ b \} \times S^2)\cup (S^2\times \{c\}) \cup
(S^2\times \{ d \})
\]
and
\[
C_1= (\{p (a)\}\times S^2) \cup (\{ p (b) \} \times
S^2)\cup (S^2\times \{p (c) \}) \cup
(S^2\times \{ p (d) \}),
\]
and take the pencil on $S^2\times S^2$ generated by $C_0$ and $C_1$.

The two curves intersect in 8 points, which form four pairs under the
action of $\iota$. Consider the elliptic pencil on $S^2\times S^2$
generated by $C_0$ and $C_1$; this pencil will have the 8 points
$C_0\cap C_1$ as base points.  By blowing up a pair (meaning two
blow-ups, i.e. a connected sum with $2\cpkk$) of these base points,
the involution naturally extends; repeating it three more times we get an
elliptic fibration 
on $S^2\times S^2\# 8\cpkk = \cpk \#
9\cpkk =E(1)$,
together with an orientation preserving free involution
which maps the strict transform of $C_0$ to $C_1$ and vice
versa.
Indeed, the free involution (still denoted by $\iota$)
will map the fiber over $[t_1:t_2]\in {\mathbb {CP}}^1$ to the fiber
over $[t_2:t_1]\in {\mathbb {CP}}^1$. (Note that the fibers over the
two points $[1:\pm 1]$ will be fixed setwise but not pointwise.)  The
fibration will have two $I_4$ fibers (for terminology regarding
elliptic fibrations, see \cite{HKK}), which are the
strict transforms of $C_0$ and $C_1$. By the classification of
potential combinations of singular fibers on $E(1)$ 
(together with the symmetry provided by $\iota$) from \cite{SSS}
it follows that the
resulting fibration has four fishtail (i.e. $I_0$) fibers next to the
two $I_4$'s.

\begin{rem}
  The pencil generated by $C_0, C_1$ on $S^2\times S^2$ can be given by
  a pencil on $\cpk$ as follows: take two distinct points $Q_1, Q_2\in
  \cpk$ and consider three generic lines $L_1, L_2, R_3$ through
  $Q_1$ and three generic lines $R_1, R_2, L_3$ through $Q_2$. Then the
  pencil is given by $P_0=L_1\cup L_2\cup L_3$ and $P_1=R_1\cup R_2\cup R_3$.
  Indeed, blowing up $Q_1, Q_2\in \cpk$ we get a pencil on $\cpk \# 2\cpkk=
  S^2\times S^2\# \cpkk$, which is the same as the pencil given
  by $C_0$ and $C_1$ after blowing up one of their intersection points.
  We will work with the pencil on $S^2\times S^2$ rather than on $\cpk$,
  as its relation with the
  involution is more transparent there.
\end{rem}

\section{Exotic structures via rational blow-down}
\label{sec:RatBlowDown}

Exotic structures on $Z_1\# 4\cpkk$ can be also constructed using the rational
blow-down operation. To show such a construction, first we
need to examine specific exotic structures on $E(1)$ which are
constructed by the application of two parallel rational blow-downs.

To this end, consider an elliptic fibration on $E(1)$ with two
fishtail fibers $F_1, F_2$ and two sections $s_1, s_2$ (the latter
being embedded $(-1)$-spheres).  Blow up the singular points in the two
fishtail fibers and the intersections $F_1\cap s_2$ and $F_2\cap
s_1$. In the resulting $E(1)\# 4\cpkk$ we find two copies of the curve
configuration of a $(-5)$-sphere intersecting a $(-2)$ sphere.
\begin{figure}[htb]
\begin{center}
\setlength{\unitlength}{1mm}
\includegraphics[height=3.5cm]{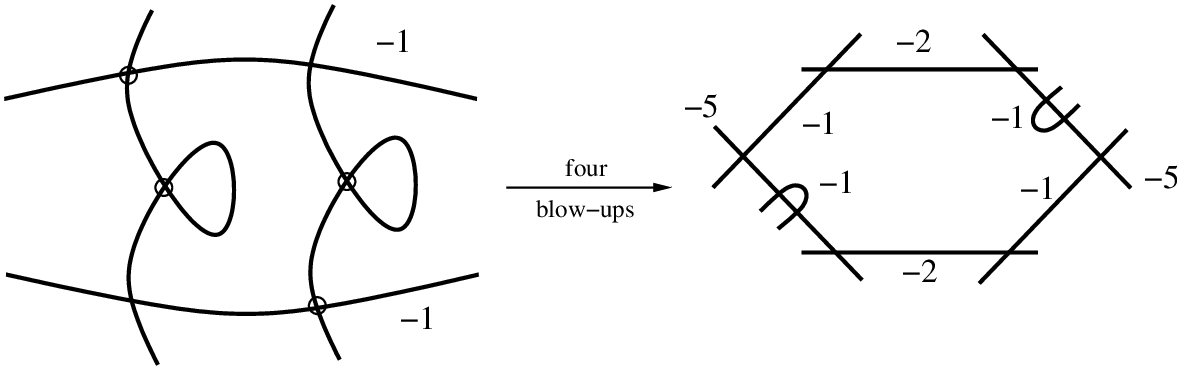}
\end{center}
\caption{\quad The two fishtail fibers are blown up twice each.
  (The points to be blown up are marked with  hollow circles on the left.)
  The exceptional
  divisors intersecting the $(-5)$-spheres twice are also shown, alhtough will
not be used in the constructions.}
\label{fig:fibration}
\end{figure}
These two configurations can be rationally blown down, as it is
described in \cite{FSRatBl}. If the starting fibration contains two
further singular fibers, one of which containing a $(-2)$-sphere
intersecting $s_1$ transversely once and disjoint from $s_2$, the
other one another $(-2)$-sphere intersecting $s_2$ transversely once
and disjoint from $s_1$, then the result of the blow-down will be
simply connected. Simple Euler characteristic and signature
calculation (together with Freedman's Classification
Theorem~\cite{Fr}) shows that the resulting four-manifold $D_{2}$ is
homeomorphic to $E(1)$. A simple Seiberg-Witten invariant calculation
shows
\begin{prop}\label{prop:exoticD2}
  $D_2$ is an exotic $E(1)$, i.e. these four-manifolds are not diffeomorphic
  to each other.
  \end{prop}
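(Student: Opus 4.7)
The plan is to distinguish $D_2$ from $E(1)$ via Seiberg--Witten invariants. Both manifolds have $b_2^+=1$, so invariants are chamber-dependent. The guiding principle is that $E(1)=\cpk\#9\cpkk$ is a rational surface and hence has identically vanishing small-perturbation Seiberg--Witten function, whereas the rational blow-down operation is specifically designed to produce manifolds with non-trivial Seiberg--Witten basic classes. Thus it suffices to exhibit a spin$^c$ structure on $D_2$ whose SW invariant is non-zero in the relevant chamber.

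The main tool I would use is the Fintushel--Stern rational blow-down formula from \cite{FSRatBl}. By construction, $D_2$ is obtained from $E(1)\# 4\cpkk$ by excising disjoint neighborhoods of two copies of the $C_3$-plumbing (a $(-5)$-sphere meeting a $(-2)$-sphere once) and gluing in two rational balls $B_3$. Spin$^c$ structures on $D_2$ correspond to spin$^c$ structures on $E(1)\# 4\cpkk$ whose restriction to each $\partial C_3=L(9,2)$ extends across $B_3$. The formula then expresses SW invariants of $D_2$ on such a spin$^c$ structure as $\pm$ the SW invariant of the corresponding extended spin$^c$ structure on $E(1)\# 4\cpkk$ in the appropriate chamber. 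I would select a spin$^c$ structure whose Chern class is supported on a small multiple of the fiber class plus exceptional contributions, verify the characteristic condition along both $C_3$'s, and read off a non-zero SW value on $D_2$ via the blow-up formula applied to the (wall-crossing) SW invariants of $E(1)$.

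The main obstacle is the careful chamber bookkeeping for $b_2^+=1$. One must fix a period point (most naturally the class of the original elliptic fiber $F$, or equivalently the forward cone direction determined by a section), and verify on both $D_2$ and the intermediate $E(1)\# 4\cpkk$ that the chosen spin$^c$ structure lies in a chamber in which the rational blow-down formula applies and the resulting wall-crossing contribution is non-zero. Once these conventions are pinned down, one obtains a non-vanishing SW basic class on $D_2$. This contradicts the vanishing of small-perturbation SW invariants on the rational manifold $E(1)$, completing the distinction of their smooth structures.
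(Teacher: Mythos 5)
Your overall strategy coincides with the paper's: Proposition~\ref{prop:exoticD2} is proved there as the $n=2$ case of Proposition~\ref{prop:distinctDn}, by combining the Fintushel--Stern rational blow-down formula with a wall-crossing computation in $E(1)\#4\cpkk=\cpk\#13\cpkk$ and the vanishing of the Seiberg--Witten invariants of the rational surface $E(1)$ (here $b_2^-\leq 9$ on both $D_2$ and $E(1)$, so the small-perturbation invariant is chamber-independent and genuinely a diffeomorphism invariant). The gap is at the decisive step: you assert that ``one obtains a non-vanishing SW basic class'' once the chamber bookkeeping is pinned down, but this is precisely what must be proved, and it is not automatic. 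The blow-down formula computes $SW_{D_2}$ on a descending spin$^c$ structure as the invariant of $\cpk\#13\cpkk$ evaluated in the chamber of a period point orthogonal to both configurations; a priori that chamber could coincide, for your chosen spin$^c$ structure, with the chamber of $h$, where the invariant vanishes by positive scalar curvature, and then the argument yields nothing. Your guiding principle that rational blow-down ``is specifically designed to produce non-trivial basic classes'' is a heuristic, not a theorem: blowing down configurations in a rational surface can perfectly well return a manifold with vanishing invariants.

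What is actually needed --- and what constitutes essentially the whole content of the paper's proof --- is an explicit verification that a wall is crossed. Concretely: (i) one takes the canonical class $K=3h-\sum_{i=1}^{13}e_i$ of $\cpk\#13\cpkk$, which is characteristic, restricts appropriately to both copies of the $(-5),(-2)$ configuration, and extends over the rational balls since the blow-down is a symplectic operation; and (ii) one exhibits a class of positive square orthogonal to every sphere in $\mathcal{C}_1\cup\mathcal{C}_2$ that lies on the opposite side of the $K$-wall from $h$ --- here $x=4h-\sum_{i=1}^{13}e_i$ works, with $x^2=3>0$, $x\cdot K=-1<0$, and $h\cdot K=3>0$. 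Only with such an $x$ in hand does the wall-crossing formula (the jump is $\pm1$ for the zero-dimensional moduli space) give $SW_{D_2}(K)=\pm1\neq 0$, which then contrasts with the identically vanishing invariants of $E(1)$. Your proposal correctly identifies where this verification must happen but defers it; as written, the non-vanishing is unsupported.
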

(The proof of this statement is a special case of the theorem we will prove
below.)

Consider now the elliptic fibration constructed earlier in
Section~\ref{sec:another}, which has two $I_4$ fibers and four
fishtail fibers.  Consider two fishtail fibers so that the involution
maps one into the other, and fix two sections which are also
interchanged by the involution.  Fix also two $(-2)$ curves, one from
each $I_4$ fibers which intersect one and the other sections and are
mapped into each other by the involution.  Blowing up as above four
times, the involution extends to this blow-up, and then to the
rationally blown-down manifold $D_2$. By taking the quotient, we get a
four-manifold which is homeomorphic to $Z_1\# 4\cpkk$, but (as the
universal covers are smoothly distinct) not diffeomorphic to it.

The same operation can be conveniently described 'downstairs' in $Z_1\#
4\cpkk$: the image of a fishtail fiber of $E(1)$ in $Z_1\# 4\cpkk$ gives
rise to an immersed sphere with one positive double point (and trivial in
homology), and a section in $E(1)$ defines a $(-1)$-sphere
intersecting the image of the fiber twice, with opposite
signs. Blowing up the double point of the immersed sphere and the
positive intersection we get the configuration of a $(-5)$-sphere and
a $(-2)$-sphere intersecting each other once.  Applying the rational
blow-down construction here (and using the image of an appropriate
$(-2)$-curve in the $I_4$ fiber upstairs to show that the rational
blow-down leaves the fundamental group intact) we get an exotic
structure on $Z_1\#4\cpkk$. This structure has the property that the
universal cover is an exotic symplectic manifold.  (Exotic structures
with this property can be also constructed by choosing a nontrivial
fibered knot in the proof of Theorem~\ref{thm:main}.)

In a similar manner, by applying further blow-ups, and blowing down
further configurations, we get an infinite family of exotic structures
on $Z_1\# 4\cpkk$.  Indeed, for such constructions we need to further
examine exotic structures on $E(1)$. In the above argument we found
six spheres in $E(1)\# 4\cpkk$ intersecting each other in a circular manner (as
shown by Figure~\ref{fig:fibration})
with self-intersections
$(-5),(-2),(-1),(-5),(-2),(-1)$; we deleted the two $(-1)$'s from the
configuration and blew down the remaining four curves. One can apply further
blow-ups and get longer chains to blow down.
If we first blow up the intersections of the $(-1)$
and $(-2)$ curves in the configuration of six (shown in
Figure~\ref{fig:blowups}),
\begin{figure}[htb]
\begin{center}
\setlength{\unitlength}{1mm}
\includegraphics[height=3.5cm]{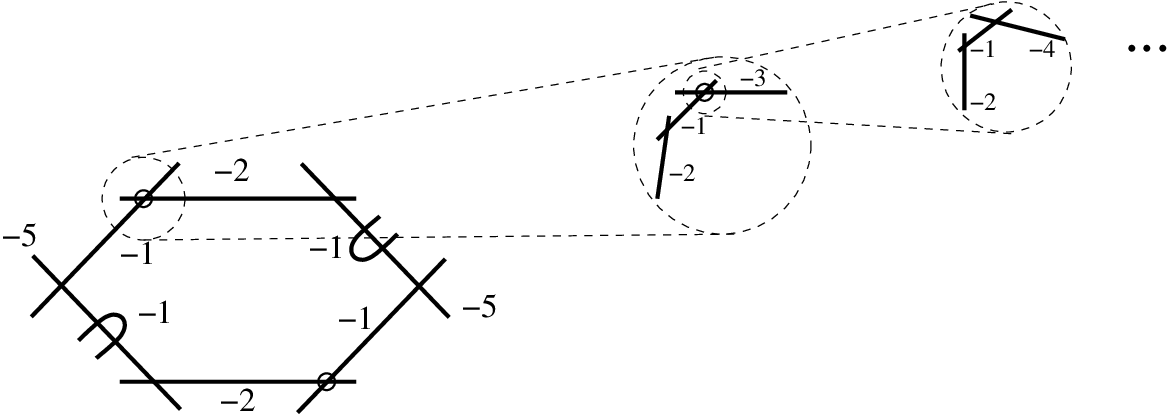}
\end{center}
\caption{\quad Repeated blow-ups of the $(-1)$- and $(-n+1)$-spheres
  are shown. The same sequence of blow-ups is applied in the intersection
of the other $(-1)$- and $(-2)$-sphere.}
\label{fig:blowups}
\end{figure}
and then blow up repeatedly (and in pairs) the intersection point of the new
$(-1)$-curve and the $(-n+1)$-curve, we get a configuration of two disjoint
sets of spheres ${\mathcal {C}}_1$ and
${\mathcal {C}}_2$ with intersection patterns $(-n), (-5), (-2), \ldots,
(-2)$ with $n-2$ $(-2)$-spheres.  We get these configurations after
blowing up $E(1)$ $2n$-times, hence in $\cpk \# (9+2n)\cpkk$.
Blowing down the two configurations ${\mathcal {C}}_1$ and ${\mathcal
  {C}}_2$ above, we get the four-manifold $D_n$. The usual
argument (using simple connectivity, guaranteed by the transverse
$(-2)$-sphere from the $I_4$ fiber, intersecting
the chain at one of its  ends) based on \cite{Fr} shows that
$D_n$ is homeomorphic to $E(1)$. Notice that for $n=2$ we recover our
earlier $D_2$.

\begin{prop}\label{prop:distinctDn}
  The four-manifolds $D_n$ for $n \geq 2$ are pairwise non-diffeomeomorphic
  symplectic four-manifolds.
\end{prop}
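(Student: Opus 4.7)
My plan is to realize each $D_n$ as a symplectic four-manifold and then distinguish them via Seiberg-Witten invariants in the symplectic chamber (recalling $b_2^+(D_n)=1$).

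First I would verify that each $\mathcal{C}_i$ is a standard generalized rational-blow-down configuration. A direct continued-fraction computation yields
\[
[n,\,5,\,\underbrace{2,\ldots,2}_{n-2}] \;=\; \frac{(2n-1)^2}{4n-3},
\]
identifying $\partial \nu(\mathcal{C}_i)$ with the lens space $L((2n-1)^2,\,4n-3)$, which bounds a rational homology ball $B_{2n-1}$ with $\pi_1\cong \Z/(2n-1)\Z$ (the Park configuration $C_{2n-1,\,2}$). The components of $\mathcal{C}_i$ are transversely intersecting holomorphic spheres --- fibers, sections, exceptional divisors --- in a K\"ahler structure on $E(1)\# 2n\cpkk$, so Symington's symplectic rational blow-down equips $D_n$ with a symplectic form $\omega_n$, whose canonical class $K_n$ satisfies $K_n^2 = 2\chi + 3\sigma = 0$.

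Next I would distinguish the smooth structures via Seiberg-Witten basic classes. Taubes' theorem gives $SW^+_{D_n}(\mathfrak{s}_{\omega_n}) = \pm 1$ in the symplectic chamber. A minimality check --- the $2n$ exceptional curves introduced by the blow-ups are absorbed into $\mathcal{C}_1\cup \mathcal{C}_2$, and $B_{2n-1}$ contains no smoothly embedded $(-1)$-sphere since $\pi_1(B_{2n-1})\neq 1$ --- shows $D_n$ is minimal. Applying the Fintushel--Stern/Park rational-blow-down formula (adapted to $b_2^+ = 1$, with careful chamber matching), basic classes of $D_n$ correspond to basic classes of $E(1)\# 2n\cpkk$ that restrict to extendable spin$^c$ structures on $L((2n-1)^2,\,4n-3)$. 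The number and values of such basic classes depend genuinely on $n$, detecting both the increasing length of the blown-down chain and the $(2n-1)$-fold torsion on the rational ball's boundary, and hence distinguishing $D_n$ and $D_{n'}$ whenever $n \ne n'$.

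The main obstacle is the $b_2^+ = 1$ chamber bookkeeping. The K\"ahler chamber of the rational surface $E(1)\# 2n\cpkk$ has vanishing Seiberg-Witten invariants (positive scalar curvature), so the symplectic chamber of $D_n$ must be matched with a non-K\"ahler chamber on $E(1)\# 2n\cpkk$ --- corresponding to a period class lying close to the boundary of the positive cone determined by $\mathcal{C}_1\cup\mathcal{C}_2$. Verifying that the Fintushel--Stern/Park formula transports Seiberg-Witten information correctly in this chamber, and ruling out spurious wall-crossing cancellations, is the technical heart of the argument. With this bookkeeping in place, the pairwise non-diffeomorphism of the $D_n$ follows the established blueprint used to construct infinite families of exotic rational surfaces.
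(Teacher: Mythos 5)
Your setup is sound and matches the paper up to a point: Symington's theorem gives the symplectic structures, and the identification of each $\mathcal{C}_i$ with a generalized rational-blow-down configuration $C_{2n-1,2}$ (your continued-fraction computation checks out for small $n$) is consistent with what is being blown down. But the final, decisive step --- actually distinguishing $D_n$ from $D_{n'}$ --- is missing, and the feature you propose to use does not work. Every $D_n$ is homeomorphic to $E(1)$, is symplectic with $K_n^2=0$ and $b_2^+=1$, and (as the paper shows using \cite{TaubesGromov} and \cite{Taubes}: $-K_n$ is represented by a square-zero torus, so any basic class $L$ satisfies $K_n\cdot L=0$, hence $L=rK_n$ with $\vert r\vert\leq 1$) has basic classes exactly $\pm K_n$ with values $\pm 1$. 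So the ``number and values'' of basic classes are the same for all $n$; they cannot detect $n$. The invariant that actually varies is the \emph{divisibility} of $K_n$ in $H^2(D_n;\Z)\cong H^2(E(1);\Z)$, which the paper computes to be $2n-3$ by an explicit homological calculation (adding to $K$ a combination of classes supported in the plumbings and checking divisibility of the extension over the rational balls, with sharpness coming from a class $x$ satisfying $x\cdot K=3-2n$). Since the set of basic classes is exactly $\{\pm K_n\}$, this divisibility is a diffeomorphism invariant, and that is what separates the $D_n$. Your proposal contains neither the divisibility computation nor the argument that $\pm K_n$ are the \emph{only} basic classes, and without both the conclusion does not follow.

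A secondary point: the chamber bookkeeping you flag as ``the technical heart'' is dispatched in the paper in one line --- since $b_2^+(D_n)=1$ and $b_2^-(D_n)=9$, the small-perturbation Seiberg--Witten invariant is well defined and one may speak of basic classes without chamber ambiguity. The wall-crossing you worry about enters only in verifying that $\pm K$ \emph{is} a basic class: one exhibits an explicit class $x=(3n-2)h-(n-1)\sum_{i=1}^{9}e_i-\sum_{i=10}^{9+2n}e_i$, orthogonal to the blown-down spheres, of positive square, pairing with $h$ and $K$ with opposite signs, so that a wall separates the positive-scalar-curvature chamber (where the invariant vanishes) from the relevant one. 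You correctly sense where the subtlety lies but do not resolve it, and in any case resolving it would still leave you without a quantity that distinguishes the manifolds.
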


Before providing the proof, we point out the obvious corollary of this
result.  As the construction is done in a $\iota$-invariant manner, we
can take the quotient of $D_n$ by $\iota$; the result will be denoted
by $G_n$.

\begin{cor}
  There are infinitely many distinct smooth structures on
  $Z_1\# 4\cpkk$ which all have symplectic universal covers, and
  which can be constructed from $Z_1\# 4\cpkk$ by applying the
  rational blow down construction.
\end{cor}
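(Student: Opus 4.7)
The plan is to deduce the corollary directly from Proposition~\ref{prop:distinctDn}, since the hard content (symplectic structure and pairwise non-diffeomorphism of the $D_n$) has already been packaged there. The whole proof is a descent argument through the free involution $\iota$.

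First, I would verify that $G_n=D_n/\iota$ is well-defined, smooth, closed and oriented, with $\pi_1(G_n)=\Z/2\Z$. All the blow-up centers (the double points of the two chosen fishtail fibers, the chosen pairs of intersection points with sections, and the subsequent pairs of $(-1)$-/$(-n{+}1)$-crossings) were chosen to come in $\iota$-orbits, and the two configurations $\mathcal{C}_1,\mathcal{C}_2$ are interchanged by $\iota$, so $\iota$ extends to a free, orientation-preserving involution on the manifold obtained before and after the rational blow-downs. Thus $G_n$ inherits a smooth structure whose universal double cover is exactly $D_n$.

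Second, I would identify the homeomorphism type of $G_n$ using Theorem~\ref{thm:classificationHK}. The Euler characteristic and signature of $G_n$ are half those of $D_n$, and by Proposition~\ref{prop:distinctDn} the latter equal those of $E(1)$; hence $\chi(G_n)=\chi(Z_1\#4\cpkk)$ and $\sigma(G_n)=\sigma(Z_1\#4\cpkk)$. The $w_2$-type matches as well: $Z_1\#4\cpkk$ is non-spin with non-spin universal cover (the $\cpkk$ summands contribute $w_2\neq 0$ already to the cover $D_n$, which is homeomorphic to $E(1)$, hence of Type I), so all three invariants agree and Theorem~\ref{thm:classificationHK} gives $G_n\approx Z_1\#4\cpkk$.

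Third, the symplectic-cover and distinctness statements are immediate: the universal cover of $G_n$ is $D_n$, which is symplectic by Proposition~\ref{prop:distinctDn}; and if $G_n\cong G_{n'}$ diffeomorphically for some $n\neq n'$, lifting the diffeomorphism to universal covers would give $D_n\cong D_{n'}$, contradicting Proposition~\ref{prop:distinctDn}. This yields the infinite family $\{G_n\}_{n\geq 2}$ of pairwise non-diffeomorphic smooth structures on $Z_1\#4\cpkk$, each obtained from $Z_1\#4\cpkk$ by descending a sequence of $\iota$-equivariant blow-ups and rational blow-downs, as described in the construction preceding the proposition. No step presents a genuine obstacle; the only point requiring care is making sure the equivariance of the blow-up centers and the blow-down configurations is preserved throughout the iterative construction, which is guaranteed by choosing everything in $\iota$-orbits from the outset.
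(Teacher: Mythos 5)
Your proposal is correct and follows essentially the same route as the paper: the paper's proof likewise observes that $G_n=D_n/\iota$ has the symplectic manifold $D_n$ as universal cover, deduces pairwise non-diffeomorphism of the $G_n$ from that of the $D_n$ (Proposition~\ref{prop:distinctDn}), and notes that the two rational blow-downs can be performed downstairs on a blow-up of $Z_1\#4\cpkk$. Your additional verification of the homeomorphism type via Theorem~\ref{thm:classificationHK} is a detail the paper handles in the discussion preceding the corollary rather than in the proof itself, but it is the intended argument.
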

\begin{proof}
  Indeed, the $G_n$ above have $D_n$ as universal cover, and those
  manifolds are symplectic. As the $D_n$'s are all smoothly distinct,
  so are the $G_n$'s. As before, the two rational blow-downs defining
  $D_n$ from $E(1)\# 2n\cpkk$ can be performed downstairs, after
  $Z_1\#4\cpkk$ is blown up $n$ times; this rational blow-down
  provides $G_n$, concluding the argument.
  \end{proof}

The interest in these manifolds stems from the fact that this
construction can be shown to be new when compared to our earlier proof
of Theorem~\ref{thm:main}.  For the sake of brevity, we will verify
this statement for $D_n$ only with even $n$.

\begin{thm}\label{thm:NotKnotSurg}
  There is no knot $K$ such that two knot surgeries on $E(1)$, both
  with $K$, results in a manifold diffeomorphic to $D_n$ for any even
  $n\geq 2$.
\end{thm}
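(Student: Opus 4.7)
The plan is to distinguish $D_n$ (for even $n\geq 2$) from any candidate $Y_K := E(1)_{K,K}$ — the four-manifold obtained from $E(1)$ by two knot surgeries along parallel fibers with the same knot $K$ — by comparing their Seiberg--Witten invariants. Since both classes of manifolds have $b_2^+ = 1$, I would work throughout with small perturbation invariants in a common chamber, namely the chamber containing the symplectic form on $D_n$ produced in the steps below.

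The first step is to identify the chains $\mathcal{C}_1, \mathcal{C}_2$ blown down in the construction of $D_n$ as standard Fintushel--Stern configurations for generalized rational blow-downs. A direct continued-fraction computation
\[
  [n, 5, \underbrace{2, \ldots, 2}_{n-2}] \;=\; \frac{(2n-1)^2}{4n-3} \;=\; \frac{(2n-1)^2}{2(2n-1)-1}
\]
shows that each $\mathcal{C}_i$ bounds the lens space $L\bigl((2n-1)^2, 2(2n-1)-1\bigr)$, so the rational blow-down is the generalized blow-down of type $(p,q) = (2n-1,2)$. By Symington's symplectic rational blow-down theorem, $D_n$ inherits a symplectic structure; by Taubes, its canonical class $K_{D_n}$ is a Seiberg--Witten basic class with $|SW_{D_n}(K_{D_n})| = 1$.

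The central computation is then carried out on both sides. On the $D_n$ side I would apply J.~Park's rational blow-down formula: basic classes of $D_n$ arise from characteristic classes on $E(1) \# 2n\cpkk$ whose restriction to each $\mathcal{C}_i$ equals a specified characteristic vector $\kappa_i$ of the plumbing. On the candidate side, Fintushel--Stern's product formula applied to $Y_K$ — for which the two surgery fibers are parallel, hence homologous copies of $[F]$ — forces the small perturbation Seiberg--Witten polynomial of $Y_K$ in the variable $t = \exp(2[F])$ to be $\Delta_K(t)^2$ times the $E(1)$ wall-crossing contribution. Two structural features result: this polynomial is a perfect square in $\Z[t, t^{-1}]$ with symmetric coefficients, and all basic classes of $Y_K$ lie in $\Z\cdot[F] \subset H^2(Y_K; \Z)$.

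The contradiction for even $n$ would come from showing that the Seiberg--Witten basic classes of $D_n$ do not all lie in a rank-one subspace of $H^2(D_n; \Z)$, or equivalently that $SW_{D_n}$ (expanded in the appropriate variable) is not a perfect square. The parity of $n$ enters through the characteristic vector $\kappa_i$: the coordinate of $\kappa_i$ corresponding to the $(-n)$-sphere has parity dictated by $n \bmod 2$, and for even $n$ this produces basic classes of $D_n$ spanning an at-least-two-dimensional subspace of $H^2$, incompatible with the rank-one, perfect-square structure forced on any $Y_K$.

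The main obstacle I foresee is making the Park-type computation of basic classes of $D_n$ explicit enough to verify the linear-independence or non-perfect-square obstruction uniformly for all even $n \geq 2$. A secondary difficulty is the $b_2^+ = 1$ chamber bookkeeping: both Seiberg--Witten polynomials must be read off in a single chamber, and the wall-crossing contributions inherited from $E(1)$ itself must be tracked symmetrically on both sides so as not to mask the structural distinction between rational blow-downs and knot surgeries.
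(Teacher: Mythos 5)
Your overall strategy---distinguish $D_n$ from $E(1)_{K,K}$ by Seiberg--Witten invariants, using the symplectic structure from Symington's theorem on one side and the Fintushel--Stern knot surgery formula on the other---is the same as the paper's, and your identification of the chains $\mathcal{C}_i$ as configurations bounding $L((2n-1)^2,4n-3)$ is correct. But the mechanism you propose for the contradiction fails. You claim that for even $n$ the basic classes of $D_n$ span an at-least-two-dimensional subspace of $H^2(D_n;\Z)$, incompatible with the rank-one structure of $E(1)_{K,K}$. This is false: by Taubes the Poincar\'e dual of $-K_{D_n}$ is represented by a torus of self-intersection $0$, so the adjunction inequality forces $K_{D_n}\cdot L=0$ for every basic class $L$; since $b_2^+(D_n)=1$, $K_{D_n}^2=0$ and $L^2\geq 2\chi+3\sigma=0$, this forces $L$ to be a rational multiple of $K_{D_n}$ (and in fact only $\pm K_{D_n}$ survive, by Taubes' constraint $|r|\leq 1$). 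So the basic classes of $D_n$ lie on a line exactly as those of $E(1)_{K,K}$ lie on $\Z\cdot[F]$; neither the rank of their span nor your ``perfect square'' criterion (note that $SW_{E(1)_{K,K}}$ is $\Delta_K(t^2)^2$ times the non-square $E(1)$ wall-crossing factor, so it is not a perfect square either) can distinguish the two manifolds.

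The invariant that does the job, and which your setup almost hands you, is the \emph{divisibility} of the extremal basic class. The paper shows that $\pm K_{D_n}$ is exactly $(2n-3)$ times a primitive class: an explicit combination of spheres supported in the plumbings, added to $K$, is visibly divisible by $2n-3$, so the restriction of $K$ to the complement (and hence its extension over the rational balls) is divisible by $2n-3$, while the class $x=(3n-2)h-(n-1)\sum_{i=1}^9 e_i-\sum_{i=10}^{9+2n}e_i$, orthogonal to both configurations, pairs with $K$ to give $3-2n$ and caps the divisibility. For $E(1)_{K,K}$ the knot surgery formula gives extremal basic class $(4d-1)[F]$ with $d=\deg\Delta_K$. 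For $n=2k$ one has $2n-3=4k-3\equiv 1\pmod 4$ while $4d-1\equiv 3\pmod 4$, so the divisibilities can never agree, and this is the contradiction. Without this (or some equivalent numerical output of the Park-type computation you defer), your argument does not close.
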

The proof of the statements above will rely on the detailed
understanding of the Seiberg-Witten invariants of $D_n$.

\begin{proof}[Proof of Proposition~\ref{prop:distinctDn}]
  As the rational blow-down operation of symplectic spheres (which are
  symplectically orthogonal) is symplectic \cite{Sym},
  it follows that $D_n$ is symplectic for all $n$.

  In order to smoothly distinguish these four-manifolds, we will
  partially determine their Seiberg-Witten invariants.  Let $K$ denote
  the canonical class on $\cpk \# (9+2n)\cpkk$.  Restrict it to the
  complement of the two curve configurations ${\mathcal {C}}_1$ and
  ${\mathcal {C}}_2$, and extend over the rational disks (the
  extension exists, as the rational blow-down is a symplectic
  operation) resulting in a cohomology class in $D_n$, also denoted by
  the same letter $K$.  This class corresponds to the spin$^c$
  structure induced by the symplectic form. As the small perturbation
  Seiberg-Witten invariant is well-defined on manifolds with $b_2^+=1$
  and $b_2^-\leq 9$, we can talk about basic classes of $D_n$.  We
  will verify the following three facts:
  \begin{enumerate}
  \item The class $\pm K$ is a Seiberg-Witten basic class of $D_n$.
  \item The class $\pm K$ is $(2n-3)$-times a primitive class in
    $H^2(D_n; \Z )$.
  \item If a characteristic class $L$ is not a rational multiple of
    $K$, or if $L=r\cdot K$ with
    $\vert r\vert>1$, then $L$ is not a basic class of $D_n$.
  \end{enumerate}
  The above properties then show that (through Seiberg-Witten basic classes)
  the divisibility of $K\in H^2(D_n; \Z )$ (which is equal to 
  $(2n-3)$) is a smooth invariant of $D_n$, concluding the proof.
  
  In order to verify the above statements, we need some more notation.
  As customary, we will work with homologies rather than cohomologies
  (identified them via Poincar\'e duality),
  and will denote the generator of $H_2(\cpk ; \Z)$ by $h$ and the
  homology classes of the exceptional divisor of the $i^{th}$ blow-up
  by $e_i$ ($i=1, \ldots, 9+2n$). The homology class of the fiber $F$
  of the elliptic fibration $E(1)$ 
  in this basis is equal to
  \[
  F=3h-\sum _{i=1}^9 e_i.
  \]
  If we denote the two sections used in the construction by $e_1,
  e_2$, then the spheres we will blow down are given by
\[
e_2-\sum _{i=6}^{4+n}e_{2i},F-2e_{11}-e_{13}, e_{13}-e_{15}, e_{15}-e_{17},
\ldots , e_{7+2n}-e_{9+2n}
\]
and 
\[
e_1-\sum _{i=6}^{4+n}e_{2i+1}, F-2e_{10}-e_{12}, e_{12}-e_{14}, e_{14}-e_{16},
\ldots , e_{6+2n}-e_{8+2n}.
\]
(Here the dots mean sequences of $e_i$'s with $i$ of the same
parity throughout the sequence.)
To verify (1), we need to show that
in $\cpk \# (9+2n)\cpkk$ there is a wall separating $h$ (which has its
Poincar\'e dual near the period point of the positive scalar curvature
metric, hence the invariant is 0), and
\[
K=3h-\sum _{i=1}^{9+2n}e_i \in H_2(\cpk \# (9+2n)\cpkk; \Z).
\]
To achieve this, we need a homology class, which is orthogonal
to all spheres in the configuration to be blown down, has positive
square, and pairs with $h$ and $K$ with opposite signs.
(Note that $h\cdot K=3>0$.) 
It is easy to
see that the class $x=(3n-2)h-(n-1)\sum _{i=1}^9e_i-\sum
_{i=10}^{9+2n}e_i$
satisfies all these conditions: $x^2=4n-5>0$, $x\cdot K=3-2n<0$ and $x\cdot h=
3n-2>0$, as $n\geq 2$ has been assumed.

To show that $K$ in $H_2(D_n; \Z )$ is divisible by $(2n-3)$, add
\begin{equation}\label{eq:ToAdd}
(n-2)(f-2e_{11}-e_{13})+(n-2)(f-2e_{10}-e_{12})+(n-2)(e_{15}-e_{13})+
  (n-2)(e_{14}-e_{12})+
\end{equation}
\[
+ (n-3)(e_{17}-e_{15}) +(n-3)(e_{16}-e_{14})+(n-4)(e_{19}-e_{17})
+(n-4)(e_{18}-e_{16})+\ldots +
\]
\[
+(e_{9+2n}-e_{7+2n})+(e_{8+2n}-e_{6+2n})
\]
to $K$ in $\cpk \# (9+2n)\cpkk$, and notice that the resulting class
is obviously divisible by $(2n-3)$. As the classes in
Equation~\eqref{eq:ToAdd} can be represented in the plumbing part (to
be blown down), it follows that the part of $K$ in the complement of
the plumbings is divisible. Simple homological argument shows that the
extension over the rational homology balls will be divisible by the
same quantity.  This shows that the divisibility of $K$ is a multiple
of $(2n-3)$, but since $x\cdot K=3-2n$, the divisiblility is equal to
$(2n-3)$.

Finally, suppose that $L$ is
another basic class. As by \cite{TaubesGromov}
the Poincar\'e dual of $-K$ can be represented by
a torus of self-intersection 0, 
the adjunction formula gives that $K\cdot L=0$.
As $D_n$ has $b_2^+=1$, this implies that $L=rK$ for some
rational number $r$.
In this latter case \cite[Theorem~2]{Taubes} shows that
$\vert r \vert \leq 1$, concluding the proof.
\end{proof}


\begin{proof}[Proof of Theorem~\ref{thm:NotKnotSurg}]
  that $D_{2k}$ can be given as two knot surgeries on $E(1)$ with knot
  $K$, having symmetrized Alexander polynomial $\Delta _K$ of degree
  $d$. Then by \cite{FSknot} the extremal basic class (i.e. the basic
  class which is the highest multiple of a primitive class) of
  $E(1)_{K,K}$ is $(4d-1)$-times a primitive class. On the other hand,
  the proof of Proposition~\ref{prop:distinctDn} showed that for
  $D_{2k}$ the extremal basic class is $(4k-3)$-times a primitive
  class. As these numbers differ mod 4, we get the desired
  contradiction.
\end{proof}

\begin{rem}
  For $D_n$ with $n$ odd the same statemnet holds, but the argument is
  slighly more involved, and requires a thorough understanding of
  further basic classes of the manifold.
\end{rem}

The involution of Section~\ref{sec:another} induces a similar
involution on the elliptic surfaces $E(2n+1)$: consider the definition
\[
E(2n+1)=E(n)\#_f E(1)\# _f E(n),
\]
where $\# _f$ denotes the fiber sum operation. We apply this operation
along two fibers of $E(1)$ which are mapped into each other via the
involution $\iota$, and extend this map to the $E(n)$'s (also denoted by
$\iota$).

Then applying the same strategy (pair of knot surgeries along
$\iota$-equivalent fibers and then taking the quotient), we get exotic
smooth structures on $Z_1\#2n\cpk \# (10n+4)\cpkk$.  Note that these
manifolds have signature $-8n-4$, so are never spin (and their
universal covers with $\sigma =-8(2n-1)$ are also non-spin).  Note
also that their $b_2^+$-invariant is always even. A simple argument
(resting on the double node surgery method of Fintushel and Stern and
rational blow down) leads to exotic four-manifolds with the same
(even) $b_2^+$, but with slighly smaller Euler characteristic.

\begin{thm}
  There are infinitely many smooth structures on 
  $Z_1\# 2n\cpk \# k\cpkk$ with $n\in \N$ and $k\geq 8n$.
\end{thm}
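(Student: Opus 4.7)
The plan is to realize each $Z_1\#2n\cpk\# k\cpkk$ with $k\geq 8n$ as the quotient by a free, orientation preserving involution of a simply connected smooth four-manifold, constructed $\iota$-equivariantly inside $E(2n+1)=E(n)\#_f E(1)\#_f E(n)$ using the double node surgery of Fintushel--Stern~\cite{FSDoubleNode}, the rational blow-down, and knot surgery. The free involution $\iota$ on $E(2n+1)$ described in the paragraph preceding the statement provides the equivariant setting; everything will be done in $\iota$-invariant pairs.

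First I would locate two $\iota$-matched copies of a double node neighborhood: each is a neighborhood of a pair of fishtail fibers joined by a section, and the pair is swapped by $\iota$ (for instance one neighborhood living in the first $E(n)$ summand and its mirror image in the second $E(n)$ summand, or a pair of $\iota$-equivariant double node neighborhoods inside the middle $E(1)$ summand). Blowing up the nodal points and the relevant section intersections in each double node neighborhood in $\iota$-equivariant pairs produces two disjoint, $\iota$-equivariant plumbing configurations of the type used in Section~\ref{sec:RatBlowDown}. Rationally blowing down these two configurations symplectically and simultaneously (as in the construction of $D_n$) gives a new simply connected manifold $Y$ with $b_2^+=4n$, with the same $w_2$-type as $E(2n+1)$, and with $b_2^-$ chosen so that the quotient $Y/\iota$ has the invariants of $Z_1\# 2n\cpk\# 8n\cpkk$. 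Simultaneously I would perform $\iota$-equivariant knot surgeries, on an $\iota$-paired pair of regular fibers outside the double node neighborhoods, with the knots $K_m$ of Section~\ref{sec:construction}; this yields a family $\{Y_m\}_{m\in\N}$ of simply connected four-manifolds, each carrying a free orientation preserving extension of $\iota$.

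Setting $W_m=Y_m/\iota$, Hambleton--Kreck's classification (Theorem~\ref{thm:classificationHK}) identifies $W_m$ with $Z_1\#2n\cpk\#8n\cpkk$ topologically, since the $w_2$-type, signature and Euler characteristic are the same by construction. Pairwise non-diffeomorphism is verified on the double covers $Y_m$ using Seiberg--Witten invariants: because $b_2^+(Y_m)=4n\geq 2$, the invariants are chamber-independent, and the usual knot surgery formula of \cite{FSknot} together with the behaviour of Seiberg--Witten basic classes under symplectic rational blow-down \cite{Sym} shows that the divisibility or extremal values of the basic classes of $Y_m$ depend on the Alexander polynomial of $K_m$, distinguishing distinct $m$. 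Finally, to obtain exotica for every $k\geq 8n$, I would perform additional $\iota$-equivariant blow-ups of $Y_m$ at pairs of points swapped by $\iota$; each such blow-up replaces $Y_m$ by $Y_m\#2\cpkk$ and the quotient $W_m$ by $W_m\#\cpkk$, and the blow-up formula preserves the distinction between different values of $m$.

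The main obstacle I expect is the equivariant bookkeeping, verifying that the double node surgery, the subsequent symplectic rational blow-downs, and the two knot surgeries can all be arranged to be strictly $\iota$-equivariant and that the resulting free involution on $Y_m$ is still orientation preserving, so that the quotient $W_m$ is well-defined, orientable, and has fundamental group exactly $\Z/2\Z$. Once this is in place, the Seiberg--Witten computation upstairs is a routine consequence of the knot surgery and rational blow-down formulas.
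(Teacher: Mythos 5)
Your overall frame (work $\iota$-equivariantly upstairs in $E(2n+1)$, combine knot surgery with rational blow-downs, pass to the quotient, distinguish via the Seiberg--Witten invariants of the double covers, and blow up further to reach all $k\geq 8n$) matches the paper, but the specific configurations you propose to blow down do not produce the required Euler characteristic, and that is where the actual content of the theorem lies. The quotient of $E(2n+1)$ by $\iota$ after equivariant knot surgery is $Z_1\#2n\cpk\#(10n+4)\cpkk$, so to reach $k=8n$ one must cut $b_2^-$ of the universal cover by $4n+8$. The configurations of Section~\ref{sec:RatBlowDown} that you invoke (blow up the nodes of fishtail fibers and their intersections with sections, then rationally blow down) are Euler-characteristic neutral: for each the number of blow-ups performed equals the number of spheres removed, which is precisely why $D_n$ is homeomorphic to $E(1)$ rather than to anything smaller. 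Your $Y$ would therefore be homeomorphic to $E(2n+1)$ and $Y/\iota$ to $Z_1\#2n\cpk\#(10n+4)\cpkk$; the range $8n\leq k<10n+4$ is exactly what is not reached, and your assertion that $b_2^-$ can simply be ``chosen'' appropriately is the missing step. (Relatedly, $b_2^+(Y)$ must be $4n+1$, not $4n$, for the quotient to have $b_2^+=2n$.)

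The paper's mechanism is different and essential: the two knot surgeries are what \emph{create} the configurations to be blown down. Since $g_3(K_m)=1$, the two sections become genus-$2$ surfaces after the knot surgeries; the double node surgery of \cite{FSDoubleNode} --- used in the Fintushel--Stern sense of trading genus for positive double points via the vanishing cycles supplied by the two $I_4$ fibers, not in the sense of the blow-up configurations of Section~\ref{sec:RatBlowDown} --- converts each into a sphere of square $-2n-1$ with two positive double points. Blowing up only the four double points yields two $(-2n-9)$-spheres, which are prolonged by chains of $2n+5$ $(-2)$-spheres coming from $I_{2n+7}$ fibers in the $E(n)$ summands; blowing down these two long configurations after only four blow-ups removes $2(2n+6)-4=4n+8$ from $b_2^-$, exactly as needed. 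Note also that you cannot decouple the knot surgeries from the configurations as you propose: a section meets every fiber, so knot surgery along any fiber turns both sections into higher-genus surfaces, and they are then no longer spheres available for a rational blow-down elsewhere. The interaction between the knot surgery and the spheres to be blown down is the heart of the proof, not bookkeeping that can be deferred.
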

\begin{proof}
  We will use the fact that our original fibration on $E(1)$ has two
  $I_4$ fibers. Indeed, consider two sections $s_1, s_2$ as before
  (mapped by $\iota$ into each other), and apply the knot surgery
  operation along two smooth fibers (interchanged by $\iota$) using
  the twist knots $K_m$ in both cases.  In this construction, the
  sections will be glued to a Seifert surface of the knot; as
  $g_3(K_m)=1$, the result is two genus-2 surfaces originated from
  $s_1$ and $s_2$.  As it is explained in \cite{FSDoubleNode}, if
  there is an $I_2$ fiber near the knot surgery, one genus on one of
  the curves can be modified to a positive double point. With an $I_4$
  fiber, the same can be done along both sections. With the two $I_4$
  fibers we can perform this operation along both knot surgeries in
  such a manner that the resulting spheres with two positive double
  points will be mapped by $\iota$ into each other.  This shows that
  in $E(2n+1)$ we have two 2-spheres with square $-2n-1$ and with two
  positive double points. Blowing up the four double points we get two
  2-spheres (again, interchanged by $\iota$) of self-intersection
  $-2n-9$.  Consider a fibration on $E(n)$ before the fiber sum with
  an $I_{2n+7}$ fiber (such fibration obviously exists). In $E(2n+1)$
  therefore we find two copies of the sphere configurations $(-2n-9),
  (-2), \ldots , (-2)$ (with $2n+5$ $(-2)$'s), which can be blown
  down. As there is a futher sphere intersecting the last $(-2)$ in
  the above chain, the result is simply connected. A simple
  application of the calculation determining the effect of the
  rational blow down process on Seiberg-Witten invariants implies that
  the results are all smoothly distinct (detected by the leading
  coefficients of the Alexander polynomials of the chosen twist
  knots).  Taking the quotient by $\iota$, we get exotic structures on
  $Z_1\# 2n\cpk \# 8n\cpkk$.  Choosing different twist knots, the
  usual argument shows that the resulting smooth structures will be
  different.  Finally, we can always blow up the resulting exotic
  manifolds, and by the blow-up formula for the universal covers it
  follows that the resulting manifolds (before and after taking
  the quotient with the $\iota$-action) will be exotic.
\end{proof}

\end{document}